\newtheorem{thm}{Theorem}
\newtheorem{lem}{Lemma}[section]
\newtheorem{cor}{Corollary}%[section]
\newtheorem{prop}[lem]{Proposition}
\theoremstyle{definition}
\theoremstyle{remark}
\newtheorem{rem}{Remark}[section]
\numberwithin{equation}{section}
\newcommand{\norm}[1]{\left\Vert#1\right\Vert}
\newcommand{\pr}[2]{\langle  #1,#2\rangle}
\newcommand{\G}{\Gamma}
\newcommand{\calA}{\mathcal{A}}
\newcommand{\calF}{\mathcal{F}}
\newcommand{\calG}{\mathcal{G}}
\newcommand{\calH}{\mathcal{H}}
\newcommand{\calO}{\mathcal{O}}
\newcommand{\calR}{\mathcal{R}}
\newcommand{\bbZ}{\mathbb{Z}}
\newcommand{\bbQ}{\mathbb{Q}}
\newcommand{\bbR}{\mathbb R}
\newcommand{\bbC}{\mathbb C}
\newcommand{\bbN}{\mathbb N}
\newcommand{\bbH}{\mathbb{H}}
\newcommand{\Tr}{ \mbox{tr}}
\newcommand{\SL}{ \mathrm{SL}}
\newcommand{\PSL}{ \mathrm{PSL}}
\newcommand{\SU}{ \mathrm{SU}}
\newcommand{\Mat}{\mathrm{Mat}}
\newcommand{\vol}{\mathrm{vol}}
\newcommand{\lap}{\triangle}
\newcommand{\bs}{\backslash}
\newcommand{\frakD}{\mathfrak{D}}
\newcommand{\fraka}{\mathfrak{a}}
\newcommand{\frakp}{\mathfrak{p}}
\newcommand{\frakb}{\mathfrak{b}}
\newcommand{\frakd}{\mathfrak{d}}
\renewcommand{\Im}{\mathrm{Im}}
\begin{document}
\title[Uniform Strong Spectral gaps]
{A Uniform Strong Spectral Gap for Congruence Covers of a compact quotient of  $\PSL(2,\bbR)^d$}%
\author{Dubi Kelmer}%
\address{Department of Mathematics, University of Chicago,  5734 S. University
Ave. Chicago, IL. 60637}
\email{kelmerdu@math.uchicago.edu}

%\thanks{}%
\subjclass{}%
\keywords{}%

\date{\today}%
\dedicatory{}%
\commby{}%

\begin{abstract}
The existence of a strong spectral gap for lattices in
semi-simple Lie groups is crucial in many
applications. In particular, for arithmetic lattices, it is useful to have bounds for the
strong spectral gap that are uniform in the family of congruence covers.
When the lattice is itself a congruence group, there are uniform and very
good bounds for the spectral gap coming from the known bounds
toward the Ramanujan-Selberg conjectures.
In this note, we establish a uniform bound for the strong spectral gap for congruence covers of an
irreducible co-compact lattice $\Gamma$ in $\PSL(2,\bbR)^d$ with
$d\geq 2$, which is the simplest and most basic case where the
congruence subgroup property is not known.
\end{abstract}

\maketitle
\section*{introduction}
Let $G=\PSL(2,\bbR)$, and let $\Gamma\subset G^d$ denote an arithmetic irreducible co-compact lattice. Note that if $d\geq 2$, then the arithmeticity of $\Gamma$ already follows from Margulis's arithmeticity theorem \cite{Margulis91}. By the classification of arithmetic lattices (see \cite{Weil60}), it follows that any such arithmetic lattice is commensurable to some conjugate of a lattice derived from a quaternion algebra defined over some number field. The space $\Gamma\bs G^d$ has a family of congruence covers $\Gamma(\fraka)\bs G^d$ where $\fraka$ denotes an integral ideal in the corresponding field. In particular, if $\Gamma$ is commensurable to $g^{-1}\Delta g$, we define
$$\Gamma(\fraka)=\Gamma\cap g^{-1}\Delta(\fraka)g,$$
where $\Delta(\fraka)$ denotes the principal congruence group of level $\fraka$. The main result of this paper is a bound for the strong spectral gap that is uniform in the family of congruence covers $\Gamma(\fraka)\bs G^d$.

We refer to \cite{kelmerSarnak09} for a detailed discussion on the strong spectral gap property;
we briefly review the basic definitions and notations.
Given an irreducible unitary representation $\pi$ of $G^d$ on a Hilbert space $\calH$, we denote by $p(\pi)$ the infimum over all $p\geq 2$ such that the matrix coefficients $\pr{\pi(g)v}{v}$ are in $L^p(G)$ for a dense set of vectors $v\in \calH$.
The right regular representation of $G^d$ on the space $L^2(\Gamma\bs G^d)$ decomposes as a direct sum of irreducible representations
\[L^2(\Gamma\bs G^d)=\bigoplus_k m(\pi_k,\Gamma) \calH_{\pi_k},\]
and we define
$$p(\Gamma)=\sup\{ p(\pi_k)| \pi_k \mbox{ non-trivial}\}.$$
As described in \cite{kelmerSarnak09}, $p(\Gamma)$ is finite for any irreducible $\Gamma$ in $G^d$, that is, $\Gamma$ has a strong spectral gap. By a bound for the strong spectral gap we mean a bound for $p(\Gamma)$.

When $\Gamma$ is a congruence group (i.e., $\Gamma\supseteq\Delta(\fraka)$ for some $\fraka$), Selberg's eigenvalue conjecture implies that $p(\Gamma)=2$, and the known bounds toward the Ramanujan-Selberg conjecture
(see \cite{BlomerBrumley10,KimSarnak03}) coupled with the
Jacquet-Langlands correspondence \cite{JacquetLanglands70} yield that $p(\Gamma)\leq \frac{64}{25}$ for all congruence groups.

When $d=1$, there are arithmetic lattices which are not congruence groups. In fact, one can construct lattices with $p(\Gamma)$ arbitrarily large (cf. \cite{Selberg65}).
However, for a family of congruence covers $\Gamma(\fraka)$ with $\Gamma\subset G$ a fixed arithmetic lattice, Sarnak and Xue \cite{SarnakXue91} showed that there is a uniform spectral gap\footnote{While their result actually dealt with prime ideals, it is possible to generalize the method also for composite ideals.}.
In \cite{Gamburd02}, Gamburd used similar ideas to obtain a uniform spectral gap for congruence covers of infinite index subgroups of $\SL(2,\bbZ)$. Such uniform bounds for congruence covers are important in applications as they represent a good substitute for the Selberg-Ramanujan Conjecture when the latter is not available (cf. \cite{BourgainGamburdSarnak06,BourgainGamburdSarnak10,Kontorovich09}).

For $d\geq 2$, Serre conjectures that any irreducible lattice $\Gamma$ in $G^d$ is a congruence one, the case of $\Gamma$ co-compact being the most elementary and fundamental case for which the congruence subgroup problem is open (see \cite[Chapter 7]{LubotzkyAlexander03}). If true, this implies that $p(\Gamma)\leq \frac{64}{25}$ uniformly for all  irreducible lattices in $G^d$. Unconditionally, in joint work with Sarnak \cite{kelmerSarnak09}, we showed that for any fixed $\Gamma\subset G^d$ and $\alpha>0$ there are at most finitely many $\pi$'s occurring in $L^2(\Gamma\bs G^d)$ with $p(\pi)>6+\alpha$. The bound we have for the number of exceptional $\pi$'s depends on $\alpha$ and $\vol(\Gamma\bs G)$ and it goes to infinity when $\alpha\to 0$ or when $\vol(\Gamma\bs G)\to\infty$. In particular, because of the dependence on the volume this result by itself does not give a uniform bound for congruence covers. However, following the suggestion in \cite{kelmerSarnak09}, we combine this with the analysis of \cite{SarnakXue91} to obtain such a uniform bound.

For a congruence cover of level $\fraka$, we say that $\pi$ is a new representation occurring in $L^2(\Gamma(\fraka)\bs G^d)$ if it does not occur in $L^2(\Gamma(\mathfrak{b})\bs G^d)$ for any divisor $\mathfrak{b}|\fraka$.
By strong approximation (see \cite{Weisfeiler84}), there is an ideal $\frakd=\frakd(\Gamma)$, we call the discriminant of $\Gamma$, such that $\Gamma(\fraka)\bs\Gamma\cong \PSL(2,\calO_L/\fraka)$ for all $\fraka$ prime to $\frakd$.  Our main result is then
\begin{thm}\label{t:main}
For any $\delta>0$, there is a constant $N_0=N_0(\Gamma,\delta)$ such that for all ideals $\fraka$ prime to $\mathfrak{d}$ with norm $N(\fraka)>N_0$, there are no new representations occurring in $L^2(\Gamma(\fraka)\bs G^d)$ with $p(\pi)>7+\sqrt{17}+\delta$.
\end{thm}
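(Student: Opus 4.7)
The strategy, as suggested in \cite{kelmerSarnak09}, is to combine a quantitative spectral counting bound with a multiplicity lower bound for new representations in $L^2(\Gamma(\fraka)\bs G^d)$, in the style of Sarnak--Xue \cite{SarnakXue91}. The argument proceeds in three steps.

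\emph{Quantitative spectral count.} The first step is to extract from the trace-formula arguments of \cite{kelmerSarnak09} a quantitative estimate of the form
\[
N(\sigma,\Gamma(\fraka)) := \sum_{\substack{\pi\subset L^2(\Gamma(\fraka)\bs G^d)\\ p(\pi)\ge \sigma}} m(\pi,\Gamma(\fraka)) \ \ll_{\Gamma,\sigma}\ \vol(\Gamma(\fraka)\bs G^d)^{\beta(\sigma)},
\]
with an explicit exponent $\beta(\sigma)$ that tends to $0$ as $\sigma\to\infty$. The mechanism is the pre-trace formula applied to a positive test function $K=\kappa\ast\tilde\kappa$, with $\kappa$ supported in a ball of radius $R$ in $G^d$. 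On the spectral side, the slow decay of matrix coefficients of a complementary-series-type $\pi$ with $p(\pi)\ge\sigma$ forces $\hat K(\pi)$ to be bounded below by a specific power of $\exp(R)$; on the geometric side, the congruence condition $\gamma\equiv e\pmod{\fraka}$ constrains non-identity contributions to be far from the identity -- a non-identity $\gamma\in\Gamma(\fraka)$ has some entry of $\gamma-I$ in $\fraka$ with norm $\ge N(\fraka)$, so that for $R\le c\log N(\fraka)$ only the identity contributes -- and balancing the two sides produces $\beta(\sigma)$.

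\emph{Multiplicity bound for new representations.} By strong approximation \cite{Weisfeiler84}, $\Gamma/\Gamma(\fraka)\cong \PSL(2,\calO_L/\fraka)$ for $\fraka$ prime to $\frakd$. The finite quotient acts on the isotypic space $V_\pi\subset L^2(\Gamma(\fraka)\bs G^d)$; if $\pi$ is new at level $\fraka$, then $V_\pi$ has no vectors fixed by $\Gamma(\frakb)/\Gamma(\fraka)$ for any proper divisor $\frakb\mid\fraka$, so the irreducible constituents of $V_\pi$ as a representation of $\PSL(2,\calO_L/\fraka)$ are all new in the corresponding sense. Factoring $\PSL(2,\calO_L/\fraka)$ over prime power divisors of $\fraka$ and invoking the classification of irreducibles of $\PSL(2,\calO_L/\frakp^k)$, the minimum dimension of such a new irreducible is $\gg N(\fraka)^{1-o(1)}$, so $m(\pi,\Gamma(\fraka))\gg N(\fraka)^{1-o(1)}$.

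\emph{Conclusion and main obstacle.} Strong approximation also gives $\vol(\Gamma(\fraka)\bs G^d)\asymp N(\fraka)^3$, so the existence of a new exceptional $\pi$ with $p(\pi)>\sigma$ would yield
\[
N(\fraka)^{1-o(1)} \ \ll_{\Gamma}\ N(\sigma,\Gamma(\fraka)) \ \ll_{\Gamma,\sigma}\ N(\fraka)^{3\beta(\sigma)},
\]
which is impossible for $N(\fraka)>N_0(\Gamma,\sigma)$ whenever $3\beta(\sigma)<1$. The threshold $\sigma_0=7+\sqrt{17}$ is then the value at which $3\beta(\sigma_0)=1$, and the parameter $\delta>0$ in the statement absorbs the $o(1)$ error together with the margin needed for the inequality to hold for all sufficiently large $N(\fraka)$. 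The hard step is the quantitative spectral count: extracting a sharp $\beta(\sigma)$ requires a carefully chosen test function on $G^d$ and a precise geometric-side estimate that tracks the non-tempered behavior in all $d$ Archimedean factors of $\pi$ simultaneously. This product-structure loss is responsible for the threshold $7+\sqrt{17}$ being weaker than the value $6$ that the same template produces in the $d=1$ Sarnak--Xue setting.
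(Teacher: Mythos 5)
Your proposal follows the Sarnak--Xue template as if it transfers directly to $d\ge 2$, but it omits the central obstruction that the paper is built around, and the one-sided spectral count you posit does not exist. Specifically, your estimate
\[
N(\sigma,\Gamma(\fraka))\ \ll_{\Gamma,\sigma}\ \vol(\Gamma(\fraka)\bs G^d)^{\beta(\sigma)}
\]
with an implied constant depending only on $\Gamma$ and $\sigma$ is exactly the statement that the paper explicitly says cannot be obtained: when $d\ge 2$, the trace-formula upper bound for $m(\pi,\Gamma(\fraka))$ unavoidably carries a factor involving the Archimedean parameter $T(\pi)=\prod_j(|\lambda(\pi_j)|^{1/2}+1)$ of the tempered factors of $\pi$, and this factor is unbounded as one ranges over the exceptional spectrum at a fixed level. (The reason is structural: for $d\ge 2$ the complementary-series factor sits alongside principal-series or discrete-series factors whose spherical transforms decay, and the test function has to be scaled against those tempered parameters.) This is not a matter of ``carefully choosing a test function'' on $G^d$; no choice removes the $T(\pi)$-dependence. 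Your deduction that a single $\beta(\sigma)$ controls everything, and hence that $\sigma_0=7+\sqrt{17}$ is where $3\beta(\sigma_0)=1$, is therefore not how the constant arises.

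What the paper actually does is to make the $T(\pi)$-dependence explicit in two \emph{independent} multiplicity upper bounds: Theorem \ref{t:multiplicity1} (Sarnak--Xue lattice counting, yielding $m\ll T(\pi)^{1/2+1/p(\pi)+\epsilon}V(\fraka)^{2/p(\pi)+\epsilon}$) and Theorem \ref{t:multiplicity2} (a refinement of the trace-formula/density argument of Kelmer--Sarnak, yielding $m\ll V(\fraka)^{1/3}T(\pi)^{2+c(1/p(\pi)-1/2)}(V(\fraka)^{2/3}+T(\pi)^{c/2-3+\epsilon})$). Feeding the multiplicity lower bound $m\gg N(\fraka)^{1-\epsilon}$ into the first gives a \emph{lower} bound $T(\pi)\gg V(\fraka)^{2\alpha/(3(8+\alpha))-\epsilon}$ for any new exceptional $\pi$ with $p(\pi)>6+\alpha$ (Corollary \ref{c:lowerbound}); feeding it into the second gives an \emph{upper} bound $T(\pi)\ll V(\fraka)^{4/(3\alpha)+\epsilon}$ (Corollary \ref{c:upperbound}). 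The theorem follows by noting these two constraints on $T(\pi)$ are incompatible once $2\alpha/(3(8+\alpha))>4/(3\alpha)$, i.e.\ $\alpha^2-2\alpha-16>0$, i.e.\ $\alpha>1+\sqrt{17}$, hence $p(\pi)>7+\sqrt{17}$. So the threshold comes from intersecting two bounds on the auxiliary parameter $T(\pi)$, not from a single spectral-count exponent. Your argument as stated would (if the claimed count held uniformly) recover the $d=1$ threshold $p(\pi)>6$, which is precisely the conclusion the paper's remark says is \emph{not} obtained here; that discrepancy is a signal that a step is missing. The missing idea is the two-sided sandwich on $T(\pi)$, which is the genuinely new ingredient of the paper.
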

Since there are only finitely many ideals with bounded norm we get a bound for the strong spectral gap that is uniform for all congruence covers of $\Gamma$, that is,
\begin{cor}\label{c:spectralgap}
For any irreducible co-compact lattice $\Gamma\subseteq G^d$, there is a constant $C(\Gamma)$ such that $p(\Gamma(\fraka))\leq C(\Gamma)$ for all $(\fraka,\frakd)=1$.
\end{cor}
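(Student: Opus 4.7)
The plan is to derive Corollary 1 directly from Theorem 1 using the definition of new representations together with the observation that only finitely many ideals have bounded norm.

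Fix some value $\delta>0$ (say $\delta = 1$) and let $N_0 = N_0(\Gamma,\delta)$ be the constant provided by Theorem 1. First, for any ideal $\mathfrak{b}$ prime to $\frakd$ with $N(\mathfrak{b})\leq N_0$, the group $\Gamma(\mathfrak{b})$ is itself an irreducible co-compact lattice in $G^d$ (it is of finite index in $\Gamma$), so the general fact recalled in the introduction---that $p(\Gamma')$ is finite for any irreducible lattice $\Gamma'\subset G^d$---gives $p(\Gamma(\mathfrak{b}))<\infty$. Since the number of ideals of norm at most $N_0$ is finite, we may define
$$C_0 = \max\set{p(\Gamma(\mathfrak{b})) \,:\, (\mathfrak{b},\frakd)=1,\ N(\mathfrak{b})\leq N_0}$$
and set $C(\Gamma) = \max\bigl(C_0,\, 7+\sqrt{17}+\delta\bigr)$.

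Now let $\fraka$ be an arbitrary ideal prime to $\frakd$ and let $\pi$ be a non-trivial irreducible representation occurring in $L^2(\Gamma(\fraka)\bs G^d)$. By the definition of new representations, either $\pi$ is new at level $\fraka$, or $\pi$ already occurs in $L^2(\Gamma(\mathfrak{b})\bs G^d)$ for some proper divisor $\mathfrak{b}\mid\fraka$. Iterating this dichotomy down the lattice of divisors of $\fraka$ (each divisor of an ideal coprime to $\frakd$ is itself coprime to $\frakd$), we conclude that $\pi$ is new at some level $\mathfrak{b}\mid\fraka$. If $N(\mathfrak{b})>N_0$, Theorem 1 directly gives $p(\pi)\leq 7+\sqrt{17}+\delta$; if instead $N(\mathfrak{b})\leq N_0$, then $p(\pi)\leq p(\Gamma(\mathfrak{b}))\leq C_0$. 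Either way $p(\pi)\leq C(\Gamma)$, and taking the supremum over non-trivial $\pi$ yields $p(\Gamma(\fraka))\leq C(\Gamma)$.

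There is essentially no obstacle here---the whole content of the corollary lies in Theorem 1. The only thing one must verify is that the induction on divisors terminates, which is clear since $\fraka$ has only finitely many divisors and the base cases $N(\mathfrak{b})\leq N_0$ are controlled by the (unconditional) finiteness of $p(\Gamma(\mathfrak{b}))$ for a fixed lattice.
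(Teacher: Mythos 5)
Your argument is correct and takes essentially the same approach the paper intends (the paper dispenses with the corollary in one sentence, noting that only finitely many ideals have norm at most $N_0$); you have simply spelled out the routine reduction to new representations via a minimal divisor $\mathfrak{b}\mid\fraka$ at which $\pi$ first appears, plus the unconditional finiteness of $p(\Gamma(\mathfrak{b}))$ from Kelmer--Sarnak for the finitely many base cases.
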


Before we go over the strategy of the proof, let us describe the analysis of \cite{SarnakXue91} in more detail.
A crucial ingredient is the following lower bound for the multiplicities of new representations occurring in $L^2(\Gamma(\fraka)\bs G)$:
For any new representation $\pi$ and ideal $\fraka$ prime to $\frakd$, we have
\begin{equation}\label{e:MLB}
m(\pi,\Gamma(\fraka))\gg_\epsilon N(\fraka)^{1-\epsilon},
\end{equation}
where $N(\fraka)=\#\calO_L/\fraka$. The bound follows from the action of the covering group $\Gamma(\fraka)\bs \Gamma$ on this space and, in particular, from a lower bound on the dimension of irreducible faithful representations of this group.

Another ingredient is the following upper bound for the multiplicities of non-tempered representations in $L^2(\Gamma(\fraka)\bs G)$: Denote by $V(\fraka)=[\Gamma:\Gamma(\fraka)]$ the degree of the cover. By a clever application of the trace formula and a counting argument for lattice points in $\Gamma(\fraka)$, they showed that the multiplicity of any fixed non-tempered representation satisfies
\begin{equation}\label{e:MUB}
m(\pi,\Gamma(\fraka))\ll_\epsilon V(\fraka)^{\frac{2}{p(\pi)}+\epsilon}.
\end{equation}

Combining (\ref{e:MLB}) and (\ref{e:MUB}) and noting that $V(\fraka)\asymp N(\fraka)^3$, we see that if $p(\pi)>6$ and $N(\fraka)$ is sufficiently large, then $\pi$ cannot occur as a new representation in $L^2(\Gamma(\fraka)\bs G)$.

We now return to the case where $d\geq 2$. The multiplicity lower bound (\ref{e:MLB}) for new representations follows from exactly the same arguments as in the case of $d=1$ (see Appendix \ref{s:amult}). Also, by essentially the same argument as in \cite{SarnakXue91}, it is not to hard to show that for a fixed $\pi$, as $V(\fraka)\to\infty$, the multiplicity upper bound (\ref{e:MUB}) also holds. We thus get that if $p(\pi)>6$ and $N(\fraka)$ is sufficiently large, then $\pi$ will not occur as a new representation. However, in contrary to the case of $d=1$, in this case the implied constant in (\ref{e:MUB}), and hence the size of $N(\fraka)$ we require, may depend on the representation $\pi$. Consequently, this argument by itself does not ensure a uniform strong spectral gap for all lattices $\Gamma(\fraka)$.

To make this dependence more precise, we consider the following parameter attached to irreducible representations of $G^d$:
For $\pi\cong \pi_1\otimes\pi_2\otimes\cdots\otimes\pi_d$ an irreducible representation of $G^d$, attach the parameter
\begin{equation}\label{e:Tpi}
T(\pi)=\prod_{j=1}^d(|\lambda(\pi_j)|^{1/2}+1).
\end{equation}
where $\lambda(\pi_j)$ denotes the eigenvalue for the action of the Casimir operator of $G$ on  $\pi_j$.
With this parameter, we can make the dependence explicit, as follows:
\begin{thm}\label{t:multiplicity1}
For $\Gamma\subset G^d$ a fixed irreducible lattice,
\[m(\pi,\Gamma(\fraka))\ll_\epsilon T(\pi)^{\frac{1}{2}+\frac{1}{p(\pi)}+\epsilon}V(\fraka)^{\frac{2}{p(\pi)}+\epsilon}.\]
\end{thm}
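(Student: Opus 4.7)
The plan is to apply the amplification method of Sarnak and Xue \cite{SarnakXue91} to the product group $G^d$, while carefully tracking the dependence on the spectral parameters of $\pi$. Let $k=f*f^*$ on $G^d$ with $f=\prod_{j=1}^d f_j$, where each $f_j$ is a $K$-finite function on $G=\PSL(2,\bbR)$ of the form $f_j(g)=\chi_{R_j}(g)\overline{\langle\pi_j(g)v_j,v_j\rangle}$, for a unit vector $v_j\in\pi_j$ in an appropriate $K$-type and a smooth bi-$K$-invariant cutoff $\chi_{R_j}$ supported in the Cartan ball of radius $R_j\geq 0$; the radii are free parameters to be optimized at the end. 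Since $\pi'(k)=\pi'(f)\pi'(f)^*\geq 0$ for every irreducible unitary $\pi'$, positivity of the operator $R(k)$ on $L^2(\Gamma(\fraka)\bs G^d)$ yields
\begin{equation*}
m(\pi,\Gamma(\fraka))\,\hat{k}(\pi) \leq \sum_{\pi'} m(\pi',\Gamma(\fraka))\,\hat{k}(\pi') = \int_{\Gamma(\fraka)\bs G^d} \sum_{\gamma\in \Gamma(\fraka)} k(x^{-1}\gamma x)\,dx.
\end{equation*}

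For the spectral lower bound on $\hat{k}(\pi)=\prod_j|\langle\pi_j(f_j)v_j,v_j\rangle|^2$, I would use the Harish-Chandra asymptotics of $\langle\pi_j(g)v_j,v_j\rangle$ together with the Plancherel formula on $G$. A case-by-case analysis over principal, complementary, and discrete series produces $|\langle\pi_j(f_j)v_j,v_j\rangle|\asymp e^{2R_j(1-2/p(\pi_j))}/T(\pi_j)^{a_j}$, where the exponents $a_j\geq 0$ arise from the Plancherel density $|c(t)|^{-2}\asymp |t|+1$ of $\PSL(2,\bbR)$ and are effective only in the tempered principal regime; the norm $\|f_j\|_2^2$ admits a parallel asymptotic. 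Multiplying over $j$ yields product expressions for $\hat{k}(\pi)$ and $\|f\|_2^2$ in terms of the $R_j$, the $p(\pi_j)$, and $T(\pi)$.

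On the geometric side, the identity term contributes $k(e)\vol(\Gamma(\fraka)\bs G^d)=\|f\|_2^2\,V(\fraka)\vol(\Gamma\bs G^d)$, while the remaining orbital integrals are bounded using a Sarnak-Xue-style lattice-point count $\#(\Gamma(\fraka)\cap U)\ll_\epsilon V(\fraka)^\epsilon(1+\vol(U)/V(\fraka))$ for symmetric neighborhoods $U$ of the identity in $G^d$ (exploiting the normality $\Gamma(\fraka)\triangleleft\Gamma$ and the known quotient $\Gamma(\fraka)\bs\Gamma\cong\PSL(2,\calO_L/\fraka)$), combined with $\|k\|_\infty\leq\|f\|_2^2$ and a more refined Abel-transform estimate for individual orbital integrals. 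This expresses the geometric side as $\|f\|_2^2\cdot\big(V(\fraka)+\text{(lattice error)}\big)$ up to the factor $V(\fraka)^\epsilon$.

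Dividing the geometric bound by $\hat{k}(\pi)$ yields a multiplicity bound depending on $R_1,\dots,R_d$, the $p(\pi_j)$, and $T(\pi)$. The optimization concentrates the amplification on a factor $j^\star$ realizing $p(\pi_{j^\star})=p(\pi)$ by taking $R_{j^\star}$ of order $\log V(\fraka)$ (with the precise coefficient chosen to balance the identity and lattice-error contributions) and $R_j=O(1)$ for $j\neq j^\star$. At this optimum, the amplification at $j^\star$ produces the factor $V(\fraka)^{2/p(\pi)}$, while the bounded contributions at the remaining factors together with the Plancherel constants $a_j$ assemble to give $T(\pi)^{1/2+1/p(\pi)}$. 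The main technical obstacles are the uniform spectral estimate in the second paragraph, where one must carefully track the $c$-function constants across the different series of $\PSL(2,\bbR)$ to extract precisely the exponent $1/2+1/p(\pi)$ on $T(\pi)$, and the extension of the Sarnak-Xue lattice-point count and orbital integral estimates to the product setting $G^d$.
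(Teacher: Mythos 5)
Your overall framework (trace formula plus Sarnak--Xue lattice counting, with a factorized test function and amplification concentrated on the non-tempered coordinate) is essentially the one the paper uses, so the strategy is sound. However, there is a genuine gap in the claimed derivation of the exponent $\tfrac12+\tfrac1{p(\pi)}$ on $T(\pi)$. If one carries out the bookkeeping you describe — Plancherel normalization of the truncated matrix coefficients $f_j$ on the spectral side, the trivial orbital bound $\|k\|_\infty\le\|f\|_2^2$, and the lattice count $\#(\Gamma(\fraka)\cap U)\ll V(\fraka)^\epsilon(1+\vol(U)/V(\fraka))$ — the ratio at each tempered or discrete factor is $F_j(1)/\widehat{F}_j(\pi_j)\asymp T(\pi_j)$, and after balancing the identity term against the lattice error one obtains only $m(\pi,\Gamma(\fraka))\ll T(\pi)\,V(\fraka)^{2/p(\pi)+\epsilon}$. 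Since $\tfrac12+\tfrac1{p(\pi)}<1$, this is strictly weaker than the theorem; in particular your claim that ``the Plancherel constants $a_j$ assemble to give $T(\pi)^{1/2+1/p(\pi)}$'' is not correct — those constants give $T(\pi)^1$.

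The missing ingredient is a \emph{trace-condition regime split} on the geometric side, implemented in the paper via two interlocking refinements. First, for a discrete-series factor with parameter $m$, Lemma~\ref{l:Bm} shows that the orbital integral of the $m$-spherical function $\Phi_m$ against an elliptic element $\gamma$ with $|\Tr\gamma|<2-m^{-1/2}$ is $O(m^{-1/2+\epsilon})$ rather than $O(1)$: because $\Phi_m$ (and hence your $f_j*\check f_j$) concentrates on a ball of radius $\asymp m^{-1/2}$ around the identity, the set of conjugators $g$ with $g^{-1}\gamma g$ in that ball has measure $\asymp m^{-1/2}$ when $\gamma$ is not too close to the identity. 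Second, Proposition~\ref{p:count} (the counterpart of your ``Sarnak--Xue-style lattice-point count,'' but with the extra parameter $\eta$) shows that lattice points $\gamma\in\Gamma(\fraka)$ with $2-\eta\le|\Tr(\gamma_j)|<2$ in the $j$th factor are rare, with a gain of $|\eta|$ in the count. Choosing $\eta=m^{-1/2}$, the orbital sum splits into a ``near-parabolic'' piece controlled by the rarity of such lattice points and a ``genuinely elliptic'' piece controlled by the concentration of $\Phi_m$, and both acquire the extra $m^{-1/2}$ that lets one push the radius $R$ up to $\log(V(\fraka)\,T(\pi)^{1/2})$ and obtain $T(\pi)^{1/2+1/p(\pi)}$. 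An analogous (in fact easier) mechanism — the shifted test function $h_{2,t}(r)=\tfrac12(h_2(r+t)+h_2(r-t))$ whose orbital integrals are dominated termwise by those of the unshifted $h_2$ — governs the tempered principal factors and yields the stronger $(T(\pi)V(\fraka))^{2/p(\pi)}$ in the spherical case. Your phrase ``a more refined Abel-transform estimate for individual orbital integrals'' gestures in the right direction, but without the regime split in $|\Tr(\gamma_j)|$ and the $\eta$-weighted lattice count the argument cannot reach the stated exponent.
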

As mentioned above, this result by itself does not imply a uniform spectral gap. However, combining this with the lower bound (\ref{e:MLB}), we get a lower bound on $T(\pi)$ for a new representation $\pi$ occurring in $L^2(\Gamma(\fraka)\bs G)$:
\begin{cor}\label{c:lowerbound}
For $\alpha>0$ and an ideal $\fraka$ prime to $\mathfrak{d}$, any new representation $\pi$ occurring in $L^2(\Gamma(\fraka)\bs \Gamma)$ with $p(\pi)>6+\alpha$ satisfies
$T(\pi)\gg_\epsilon V(\fraka)^{\frac{2\alpha}{3(8+\alpha)}-\epsilon}$.
\end{cor}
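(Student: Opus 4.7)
The proof is a direct combination of the multiplicity lower bound \eqref{e:MLB} with the upper bound of Theorem \ref{t:multiplicity1}, so my plan is essentially to equate the two and solve for $T(\pi)$.

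First, assuming $\pi$ is a new representation occurring in $L^2(\Gamma(\fraka)\bs G^d)$, I would combine
\[
N(\fraka)^{1-\epsilon}\ll_\epsilon m(\pi,\Gamma(\fraka))\ll_\epsilon T(\pi)^{\frac{1}{2}+\frac{1}{p(\pi)}+\epsilon}V(\fraka)^{\frac{2}{p(\pi)}+\epsilon},
\]
and use the fact that $V(\fraka)\asymp N(\fraka)^3$ (valid for $\fraka$ prime to $\frakd$) to substitute $N(\fraka)\asymp V(\fraka)^{1/3}$ on the left. Rearranging gives
\[
T(\pi)^{\frac{1}{2}+\frac{1}{p(\pi)}}\gg_\epsilon V(\fraka)^{\frac{1}{3}-\frac{2}{p(\pi)}-\epsilon},
\]
which, after multiplying numerator and denominator by $6p(\pi)$, simplifies to
\[
T(\pi)\gg_\epsilon V(\fraka)^{\frac{2(p(\pi)-6)}{3(p(\pi)+2)}-\epsilon}.
\]

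The remaining step is to check that the exponent $f(p)=\frac{2(p-6)}{3(p+2)}=\frac{2}{3}\bigl(1-\frac{8}{p+2}\bigr)$ is monotonically increasing in $p$, so that the hypothesis $p(\pi)>6+\alpha$ yields the uniform bound $f(p(\pi))\geq f(6+\alpha)=\frac{2\alpha}{3(8+\alpha)}$. Absorbing the small $\epsilon$ losses into the final $\epsilon$, we conclude
\[
T(\pi)\gg_\epsilon V(\fraka)^{\frac{2\alpha}{3(8+\alpha)}-\epsilon},
\]
which is the desired statement.

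There is no real obstacle here; the work is just algebraic bookkeeping, and the monotonicity check is elementary. The only mildly delicate point is being careful to track the $\epsilon$'s across the two inequalities, but since both are stated with arbitrary $\epsilon>0$, this causes no difficulty.
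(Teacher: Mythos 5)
Your proof is correct and follows exactly the route the paper intends: combine the lower bound~\eqref{e:MLB} (with $N(\fraka)\asymp V(\fraka)^{1/3}$) against Theorem~\ref{t:multiplicity1}, solve for $T(\pi)$, and use monotonicity of $p\mapsto\frac{2(p-6)}{3(p+2)}$ to pass from $p(\pi)>6+\alpha$ to the stated exponent. The paper states the corollary without spelling out these algebraic steps, so your write-up just makes explicit what is implicit there.
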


Hence, if we obtain a sufficiently good upper bound for $T(\pi)$ for these representations, we could get a uniform spectral gap. Such an upper bound can indeed be obtained by refining the analysis in \cite{kelmerSarnak09}.
Specifically, following the arguments in the proof of \cite[Theorem 4]{kelmerSarnak09} adapted to congruence covers, we give an alternative upper bound for the multiplicities.
\begin{thm}\label{t:multiplicity2}
For $\Gamma\subset G^d$ a fixed irreducible lattice and any $c>0$ we have
\[m(\pi,\Gamma(\fraka))\ll_\epsilon V(\fraka)^{1/3}T(\pi)^{2+c(\tfrac{1}{p(\pi)}-\tfrac{1}{2})}\left(V(\fraka)^{2/3}+T(\pi)^{c/2-3+\epsilon}\right).\]
\end{thm}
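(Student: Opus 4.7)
The plan is to adapt the proof of \cite[Theorem 4]{kelmerSarnak09} --- which controls multiplicities on a fixed lattice in terms of the parameter $T(\pi)$ --- to the family of congruence covers $\Gamma(\fraka)$, by combining that spectral analysis with the lattice-point counting estimate of \cite{SarnakXue91}. The starting point is the pre-trace formula on $\Gamma(\fraka)\bs G^d$: I choose a bi-$K^d$-invariant test function $k\in C_c(G^d)$ whose spherical transform $\widehat k$ is non-negative and satisfies $\widehat k(\pi)\gg 1$. Positivity then yields
\[
m(\pi,\Gamma(\fraka))\,\widehat k(\pi)\ll \int_{\Gamma(\fraka)\bs G^d}\sum_{\gamma\in\Gamma(\fraka)}k(g^{-1}\gamma g)\,dg,
\]
and the geometric side splits into an identity contribution of size $V(\fraka)\,k(e)$ plus the contribution from the non-identity $\gamma\in\Gamma(\fraka)$.

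The test function is built as a tensor product $k=k_1\otimes\cdots\otimes k_d$, where each factor $k_j$ is chosen, following \cite{kelmerSarnak09}, to have a spherical transform non-negative and concentrated near the Casimir eigenvalue $\lambda(\pi_j)$; the exponent $c>0$ plays the role of a smoothness/convolution parameter that trades off the sharpness of spectral concentration against the $L^\infty$-size of $k$. Standard spherical analysis on $\PSL(2,\bbR)$ then produces: (i) the lower bound $\widehat k(\pi)\gg 1$; (ii) an upper bound $k(e)\ll T(\pi)^{2+c(\frac{1}{p(\pi)}-\frac{1}{2})}$, obtained via matrix-coefficient estimates that use the $L^{p(\pi_j)+\epsilon}$ integrability on each factor; and (iii) an effective support radius of order $T(\pi)^{c/2}$ on each $\PSL(2,\bbR)$ factor, which controls how large a ball in $\Gamma(\fraka)$ must be counted. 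For the geometric side I invoke the Sarnak-Xue-type counting estimate for $\Gamma(\fraka)$: non-identity elements have norm $\gg N(\fraka)\asymp V(\fraka)^{1/3}$, and inside a ball of radius $R$ past this threshold the count grows like $R^{3d}/V(\fraka)$ up to $N(\fraka)^\epsilon$ divisor losses. Balancing the identity and geometric contributions at the relevant scale yields the two summands $V(\fraka)^{2/3}$ (identity dominates) and $T(\pi)^{c/2-3+\epsilon}$ (geometric dominates), multiplied by the common prefactor $V(\fraka)^{1/3}T(\pi)^{2+c(\frac{1}{p(\pi)}-\frac{1}{2})}$.

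The main technical obstacle will be the careful bookkeeping across the $d$ factors of $G^d$: since $T(\pi)$ and $p(\pi)$ aggregate information from all the $\pi_j$, the tensor-product construction of $k$ and the resulting bounds on $\widehat k(\pi)$ and $k(e)$ must be tracked factor by factor and then re-assembled, and the interpolation between tempered and non-tempered behavior across factors --- encoded in the exponent $c(\frac{1}{p(\pi)}-\frac{1}{2})$ --- is the delicate multi-factor analog of the argument in \cite{kelmerSarnak09}. Once this analysis is in place, combining it with the Sarnak-Xue count (whose effectivity \emph{uniformly in} $\fraka$ is the congruence-cover input that distinguishes the present setting from that of \cite{kelmerSarnak09}) yields the claimed bound.
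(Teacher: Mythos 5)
Your top-level architecture is right: you run the test-function argument of \cite[Theorem~4 / Prop.~3.1]{kelmerSarnak09} on $\Gamma(\fraka)$, with $c$ governing the trade-off between spectral localization (producing the factor $T(\pi)^{c(\frac{1}{p(\pi)}-\frac12)}$) and the geometric support radius $T^{c/2}$, then balance the identity term against the rest. That is indeed the skeleton of the paper's proof, which starts from the inequality (\ref{e:multbound1}) lifted verbatim from \cite[Prop.~3.1]{kelmerSarnak09}.

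The concrete gap is the arithmetic input on the geometric side. You propose to plug in a Sarnak--Xue lattice-point count (a norm threshold $\gg N(\fraka)$, a ball count $\sim R^{3d}/V(\fraka)$). That is the input for Theorem~\ref{t:multiplicity1}, not this one, and the scaling $R^{3d}/V(\fraka)$ is not even what Sarnak--Xue gives: their count is exponential in the hyperbolic radius, with the decisive secondary term $\sim e^{R/2}/V(\fraka)^{2/3}$. More to the point, once the \cite[Prop.~3.1]{kelmerSarnak09} test function is applied, the geometric side is organized by \emph{traces} $t\in\Tr(\Gamma(\fraka))$, each weighted by $F_\Gamma(t)=\sum_{\Tr(\gamma)=t}\vol(\Gamma_\gamma\backslash G_\gamma^d)$, not by individual lattice elements, so a lattice-point count does not directly apply. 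The two ingredients the paper actually uses are: (i) the class-number bound $F_\Gamma(t)\leq F_\Delta(t)\ll_\epsilon |(t_1^2-4)(t_2^2-4)|^{1/2+\epsilon}$ from \cite[Prop.~3.3]{kelmerSarnak09}, uniform in $\fraka$ simply because $\Gamma(\fraka)$ sits (up to conjugation) inside the fixed order $\Delta$; and (ii) the congruence $\Tr(\gamma)\equiv 2\pmod{\fraka^2}$ for $\gamma\in\Gamma(\fraka)$ (the $(a+1)(a-1)\in\fraka^2$ computation from the proof of Proposition~\ref{p:count}), which via Lemma~\ref{l:box} shows the number of traces in each box is $\ll\vol(B)/N(\fraka)^2+1$. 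The density gain is therefore $N(\fraka)^{-2}=V(\fraka)^{-2/3}$, and against the overall factor $V(\fraka)$ from the covering index this is exactly where $V(\fraka)^{1/3}$ and the term $T(\pi)^{c/2-3+\epsilon}$ come from. Without the trace congruence mod $\fraka^2$ and the $\fraka$-uniform class-number bound, your plan does not reach the claimed exponents; the lattice-point counting step should be replaced by this trace-and-class-number argument.
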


Combining this upper bound (with a suitable choice of the parameter $c$) and the lower bound (\ref{e:MLB}), we obtain an upper bound for $T(\pi)$. Specifically, we have
\begin{cor}\label{c:upperbound}
For $\alpha>0$ and an ideal $\fraka$ prime to $\mathfrak{d}$, any new representation $\pi$ occurring in $L^2(\Gamma(\fraka)\bs \Gamma)$ with $p(\pi)>6+\alpha$  satisfies
$T(\pi)\ll_\epsilon V(\fraka)^{\frac{4}{3\alpha}+\epsilon}$.
\end{cor}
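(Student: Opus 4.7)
The plan is to combine the multiplicity lower bound \eqref{e:MLB} with the upper bound furnished by Theorem \ref{t:multiplicity2}, optimized over the free parameter $c$. First I would translate \eqref{e:MLB} into the $V(\fraka)$-scale: since $\Gamma(\fraka)\bs\Gamma\cong\PSL(2,\calO_L/\fraka)$ for $\fraka$ prime to $\frakd$, one has $V(\fraka)\asymp N(\fraka)^3$, so any new $\pi$ satisfies $m(\pi,\Gamma(\fraka))\gg_\epsilon V(\fraka)^{1/3-\epsilon}$. Writing $p=p(\pi)>6+\alpha$ and $T=T(\pi)$, and expanding the parenthesis in Theorem \ref{t:multiplicity2}, this produces
\[
V(\fraka)^{1/3-\epsilon}\ll V(\fraka)\,T^{a}+V(\fraka)^{1/3}T^{a+b},
\]
where $a=2+c(1/p-1/2)$ and $a+b=c/p-1+\epsilon$.

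The second step is to choose $c=p-\delta$ with a small parameter $\delta>0$ to be sent to $0$. A direct computation gives $a=3-p/2+\delta(p-2)/(2p)\leq -\alpha/2+\delta/2$ and $a+b=-\delta/p+\epsilon$, so that both exponents of $T$ are negative as soon as $\delta<\alpha$ and $\epsilon<\delta/p$. The existence of such a $c$ corresponds exactly to the nonemptiness of the interval $\bigl(4p/(p-2),\,p\bigr)$, which is equivalent to $p>6$ and is therefore guaranteed by the hypothesis.

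With both $T$-exponents negative, the two summands on the right are decreasing in $T$, and pigeonholing on the displayed inequality forces one of them to be $\gg V(\fraka)^{1/3-\epsilon}$. This yields either
\[
T\ll V(\fraka)^{(2/3+\epsilon)/(-a)}\qquad\text{or}\qquad T\ll V(\fraka)^{\epsilon/(-(a+b))}.
\]
The first alternative gives $T\ll V(\fraka)^{(2/3)/(\alpha/2-\delta/2)+O(\epsilon)}$, while the second gives $T\ll V(\fraka)^{O(\epsilon p/\delta)}$, which is essentially vacuous once $\epsilon\ll\delta$. Sending $\delta\to 0$ and $\epsilon/\delta\to 0$, the first bound sharpens to $T(\pi)\ll_{\epsilon'}V(\fraka)^{4/(3\alpha)+\epsilon'}$ for any $\epsilon'>0$, which is the desired conclusion.

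The main obstacle is simply the bookkeeping of exponents: one must arrange for \emph{both} $T$-exponents in Theorem \ref{t:multiplicity2} to become negative simultaneously so that the pigeonhole step produces an upper---rather than a lower---bound on $T(\pi)$, and then tune $\delta$ (hence the $\pi$-dependent choice of $c$) against $\alpha$ to extract the clean exponent $4/(3\alpha)$. No other substantive difficulty arises.
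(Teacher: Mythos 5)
Your overall strategy is the same as the paper's: feed the lower bound $m(\pi,\Gamma(\fraka))\gg_\epsilon V(\fraka)^{1/3-\epsilon}$ into Theorem~\ref{t:multiplicity2}, arrange for both $T$-exponents to be negative, and pigeonhole on the two terms. But the specific choice of $c$ you make introduces a genuine uniformity gap.

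You set $c=p-\delta$ with $\delta$ fixed and small, which gives $a+b=-\delta/p+\epsilon$. You correctly note that negativity of $a+b$ requires $\epsilon<\delta/p$, and that the case-2 bound has exponent $\epsilon/(-(a+b))=O(\epsilon p/\delta)$. Here is the problem: for a fixed choice of $\delta$ and $\epsilon$, as soon as $p(\pi)>\delta/\epsilon$ the quantity $a+b$ becomes non-negative and the pigeonhole collapses. And there is no a priori upper bound on $p(\pi)$ — ruling out large $p(\pi)$ is precisely what Theorem~\ref{t:main} is trying to do. You claim the case-2 bound becomes ``essentially vacuous once $\epsilon\ll\delta$,'' but the correct condition is $\epsilon\ll\delta/p$, so the auxiliary parameters must be taken $\pi$-dependent. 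Once $\epsilon$ and $\delta$ depend on $p(\pi)$, the implied constants in Theorem~\ref{t:multiplicity2} and in the multiplicity lower bound \eqref{e:MLB} (both of the form $C(\epsilon)$ with $C(\epsilon)\to\infty$ as $\epsilon\to 0$) are no longer uniform over $\pi$; moreover the constant $(2C)^{1/(-(a+b))}$ produced by the case-2 pigeonhole blows up as $-(a+b)=\delta/p-\epsilon\to 0$. The net output is a bound $T(\pi)\leq C(p(\pi),\epsilon)V(\fraka)^{4/(3\alpha)+\epsilon}$ with a $\pi$-dependent constant, which is strictly weaker than the corollary and would not suffice for the deduction of Theorem~\ref{t:main}.

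The paper avoids this by choosing $c$ near the \emph{left} end of your interval $(4p/(p-2),p)$ rather than the right end: concretely $c=6+\alpha-2\delta-2\epsilon$, independent of $\pi$. With this choice, the hypothesis $p(\pi)>6+\alpha$ gives $1/p<1/(6+\alpha)$ uniformly, so the first exponent is $\leq -\alpha/2+O(\delta+\epsilon)$ and the second is $\leq -\tfrac{2\delta}{\alpha+6}+\epsilon$, both bounded away from zero \emph{uniformly in} $p(\pi)$. The pigeonhole then produces a constant depending only on $\alpha,\delta,\epsilon$, and sending $\delta,\epsilon\to 0$ gives the clean exponent $4/(3\alpha)$. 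If you replace your $c=p-\delta$ with a fixed $c$ slightly below $6+\alpha$ your argument goes through as intended.
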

When $p(\pi)$ is sufficiently large, this upper bound is already smaller than the lower bound given in Corollary \ref{c:lowerbound}, and combining these two bounds gives Theorem \ref{t:main}. Indeed, we have
\begin{proof}[Proof of Theorem \ref{t:main}]
Let $\pi$ denote a new representation occurring in $L^2(\Gamma(\fraka)\bs G^d)$ with $p(\pi)>7+\sqrt{17}+\delta$.
Then, combining Corollaries \ref{c:lowerbound} and \ref{c:upperbound}, we get
\[V(\fraka)^{\frac{2\alpha}{3(8+\alpha)}-\epsilon}\ll_\epsilon T(\pi)\ll_\epsilon V(\fraka)^{\frac{4}{3\alpha}+\epsilon},\]
with $\alpha=1+\sqrt{17}+\delta$. Consequently, there is some constant $C(\epsilon)$ such that
\[V(\fraka)^{\frac{2\alpha^2-4\alpha-32}{3\alpha(8+\alpha)}-\epsilon}\leq C(\epsilon).\]
For this choice of $\alpha$, we have that $\frac{2\alpha^2-4\alpha-32}{3\alpha(8+\alpha)}=c(\delta)>0$ and choosing $\epsilon_0=c(\delta)/2$ sufficiently small, we get that $V(\fraka)$, and hence, $N(\fraka)$ is bounded.
\end{proof}

\begin{rem}
We note that this result is not as good as what was suggested in \cite{kelmerSarnak09}. Even though each of the arguments in \cite{SarnakXue91} and \cite{kelmerSarnak09} separately only break down when $p(\pi)=6$, when combining them we have to assume that $p(\pi)$ is larger than $7+\sqrt{17}\thickapprox 11.12$, in order to make the two bounds on $T(\pi)$ overlap.
\end{rem}

\begin{rem}
The condition that $\fraka$ is prime to $\mathfrak{d}$ may be replaced with the weaker condition that there is a decomposition $\fraka=\fraka_0\fraka_1$ with $N(\fraka_0)$ uniformly bounded and $\fraka_1$ prime to $\frakd$ . In particular, if we restrict to square-free ideals the condition of being prime to $\mathfrak{d}$ can be dropped.
\end{rem}

\begin{rem}
If we consider only spherical representations we can slightly improve Theorem \ref{t:multiplicity1} to show that $m(\pi,\Gamma(\fraka))\ll_\epsilon (T(\pi)V(\fraka))^{\frac{2}{p(\pi)}+\epsilon}$. Consequently, for spherical representations we can replace $7+\sqrt{17}$ in Theorem \ref{t:main} by $6+2\sqrt{2}\thickapprox 8.8$. Since spherical representations correspond to eigenfunctions of the Laplacian $\lap_{z_j}$ in $L^2(\Gamma(\fraka)\bs \bbH\times\cdots\times\bbH)$, we get that when $N(\fraka)$ is sufficiently large any new eigenfunction has eigenvalue bounded below by $\frac{1}{10}$.
\end{rem}

\begin{rem}
An alternative way to define the congruence covers is to first restrict scalars to $GL_{n}(\bbZ)$ and then take elements congruent to $I$ modulo $N$ (for $N\in \bbN$). In our setting, this corresponds to congruence covers $\Gamma(\fraka)$ with $\fraka=N\calO_L$ a principal rational ideal. Since we allow more general ideals, we get, in particular, a uniform spectral gap for these congruence covers.
\end{rem}

\subsection*{Some notation}\label{s:BG}
We write $X\ll Y$ or $X=O(Y)$ to indicate that $X\leq CY$ for some constant $C$.
If we wish to emphasize that the constant depends on some parameters we indicate it by subscripts, for example $X\ll_\epsilon Y$. We will write $X\asymp Y$ to indicate that $X\ll Y\ll X$. We note that all the implied constants may depend on the lattice $\Gamma$ that we think of as fixed.

Throughout this note we denote by $G=\PSL(2,\bbR)$ and $\hat{G}$ its unitary dual,
which we parameterize by $(0,\frac{1}{2})\cup\{\frac{1}{2}+i \bbR^+\}\cup \bbZ$.
In particular, the spherical irreducible representations of $G$ are given by the principal
series representations $\pi_{s},\;s\in\frac{1}{2}+i\bbR^+$ and the complementary series representations
$\pi_s,\;s\in (0,\tfrac{1}{2})$. The non-spherical representations are given by the discrete series $\frakD_{m},\;m\in\bbZ\setminus\{0\}$. With this parametrization, the eigenvalue $\lambda(\pi)$ for the action of the Casimir operator of $G$ on the irreducible representations $\pi$ is given by $\lambda(\pi_s)=s(1-s)$ and $\lambda(\frakD_m)=|m|(1-|m|)$. The discrete and principal series are both tempered while the
complementary series is non-tempered with $p(\pi_s)=\frac{1}{s}$.

An irreducible unitary representation of $G^d$ is of the form $\pi\cong \pi_1\otimes\cdots\otimes\pi_d$ with each $\pi_j\in \hat{G}$. For such a representation $\lambda(\pi)=(\lambda(\pi_1),\ldots,\lambda(\pi_d))\in \bbR^d$ and $p(\pi)=\max_{j}p(\pi_j)$.

We denote by $P,A,K\subset G$ the subgroups of upper triangular matrices, diagonal matrices, and orthogonal matrices respectively. For $t\in \bbR$ and $\theta\in[0,2\pi]$ let $a_t=\left(\begin{smallmatrix}e^{t/2} & 0\\0& e^{-t/2}\end{smallmatrix}\right)\in A$ and $k_\theta=\left(\begin{smallmatrix}\cos(\frac{\theta}{2}) & \sin(\frac{\theta}{2}) \\-\sin(\frac{\theta}{2})& \cos(\frac{\theta}{2})\end{smallmatrix}\right)\in K$. For $z=x+iy\in \bbH$, the upper half plane, let $p_z=\left(\begin{smallmatrix}y^{1/2} & xy^{-1/2} \\ 0 & y^{-1/2}\end{smallmatrix}\right)\in P$.
The decomposition $G=PK$ gives us the coordinate system $g(z,\theta)=p_zk_\theta$. The Haar measure in these coordinates is given by $dg(z,k)=dzdk$ where $dz=\tfrac{dxdy}{y^2}$ and $dk=\frac{d\theta}{2\pi}$. We also have the decomposition $G=KA^+K$, and in the corresponding coordinates, the Haar measure is given by $dg(k,t,k')=2\pi \sinh(t)dtdkdk'$.

For $g=\left(\begin{smallmatrix} a& b\\ c&d\end{smallmatrix}\right)\in \Mat(2,\bbC)$, let $\norm{g}^2=\Tr(g^t\bar{g})=|a|^2+|b|^2+|c|^2+|d|^2$. For $g\in G$, let $H(g)=d(gi,i)$, where $d(\cdot,\cdot)$ denotes the hyperbolic distance on $\bbH$. We thus have that $H(ka_tk')=t$ and a simple calculation shows that $\norm{g}^2=2\cosh(H(g))$ for any $g\in G$.
The triangle inequality for the hyperbolic distance implies that for any $g,\gamma\in G$,
\begin{equation}\label{e:triangle}
|H(g^{-1}\gamma g)-H(\gamma)|\leq 2H(g)
\end{equation}
For any $\delta>0$ we denote by $B_\delta\subset G$ the ball $B_\delta=\{g\in G|H(g)\leq \delta\}$ so that
$|H(g^{-1}\gamma g)-H(\gamma)|\leq 2\delta$ for all $g\in B_\delta$.
%For $g=\left(\begin{smallmatrix} a& b\\ c&d\end{smallmatrix}\right)\in \PSL(2,\bbR)$ we denote by $j_g(z)=\frac{cz+d}{c\bar{z}+d}$.
%Note that if $gp_z=p_{gz}k_\theta$ then $j_g(z)=e^{-i\theta}$.
\subsection*{Acknowledgements}
We thank Peter Sarnak for discussions about this paper.

\section{Counting Lattice Points}
In this section we give some bounds for the number of lattice points in certain regions in $\Gamma(\fraka)$. As we are interested in upper bounds, it is sufficient to consider only the principal congruence groups $\Delta(\fraka)$. We briefly go over the construction of these groups.
\subsection{Lattices derived from quaternion algebras}\label{s:QuatAlg}
Let $L$ be a totally real number field of degree $[L:\bbQ]=n\geq d$; denote by $\calO_L$ its ring of integers and let
$\iota_1,\ldots,\iota_n$ denote the different embeddings of $L$ into
$\bbR$. Let $p,q\in \calO_L$ and assume that $\iota_j(p)>0,\;\iota_j(q)<0$ for $j=1,\ldots,d$ and $\iota_j(p)<0,\;\iota_j(q)<0$ for $j>d$. The quaternion algebra $\calA=\left(\frac{p,q}{L}\right)$ is the $4$ dimensional algebra over $L$ generated by $1,I,J,$ and $K$ with relations
\[I^2=p,J^2=q,K=IJ=-JI.\]
For each $j\leq d$ we have $\calA\otimes_{\iota_j(L)} \bbR\cong \Mat(2,\bbR)$ while for $j>d$ we have that
$\calA\otimes_{\iota_j(L)} \bbR$ is isomorphic to Hamilton Quaternions.

For each $j\leq d$ (respectively $j>d$), the embedding $\iota_j$ induces an embedding (that we still denote by $\iota_j$) of $\calA$ into $\Mat(2,\bbR)$ (respectively $\Mat(2,\bbC)$) sending $\alpha=a+bI+cJ+dK$ to
\begin{equation}\iota_j(\alpha)=\begin{pmatrix}\iota_j(a)+\iota_j(b)\sqrt{\iota_j(p)}& \iota_j(cq)+\iota_j(dq)\sqrt{\iota_j(p)}\\ \iota_j(c)-\iota_j(d)\sqrt{\iota_j(p)}& \iota_j(a)-\iota_j(b)\sqrt{\iota_j(p)}\end{pmatrix}.\end{equation}

For each $\alpha=a+bI+cJ+dK\in \calA$ the relative norm and trace of $\alpha$ are defined as
\begin{eqnarray}
n_\calA(\alpha)&=&a^2-pb^2-qc^2+pqd^2,\\
\Tr_\calA(\alpha)&=&2a.
\end{eqnarray}

Under the above imbedding $\iota_j(n_\calA(\alpha))=\det(\iota_j(\alpha))$ and $\iota_j(\Tr_\calA(\alpha))=\Tr(\iota_j(\alpha))$.
In particular, the group of norm one elements, $\calA^1=\{\alpha\in\calA|n_\calA(\alpha)=1\}$,
satisfies that $\iota_j(\calA^1)\subset\SL(2,\bbR)$ for $j\leq d$ and $\iota_j(\calA^1)\subset\SU(2)$ for $j>d$.

For $\alpha\in \calA^1$ let $\iota(\alpha)=(\iota_1(\alpha),\ldots,\iota_d(\alpha))\in\SL(2,\bbR)^d$.
Let
$$\calR=\{\alpha=a+bI+cJ+dK\in \calA|a,b,c,d\in\calO_L\},$$
and $\calR^1=\calR\cap\calA^1$.
The image $\iota(\calR^1)$ is an irreducible lattice in $\SL(2,\bbR)^d$. This lattice is co-compact unless $\calA\cong \left(\frac{-1,1}{L}\right)\cong\Mat(2,L)$ (this can happen only when $n=d$).
Projecting this lattice modulo $\pm I$ gives a lattice  $\Delta\subset G^d$ that we call the lattice derived from the quaternion algebra $\calA$.

For any ideal $\fraka\in \calO_L$, the group
$$\calR^1(\fraka)=\{\alpha=a+bI+cJ+dK\in \calA|a-1,b,c,d\in\fraka\},$$
is a subgroup of finite index in $\calR^1$, and the principal congruence group $\Delta(\fraka)$ is defined as the image of $\calR^1(\fraka)$ under the projection to $G^d$.

\subsection{Counting functions}
The multiplicity bound (\ref{e:MUB}) for non-tempered representations occurring in $L^2(\Gamma(\fraka)\bs G)$ was obtained in \cite{SarnakXue91} from an estimate of the counting function
$$N(x;\fraka)=\sharp\{ 1\neq\alpha\in \calR^1(\fraka)|\norm{\alpha}_1^2\leq x,\;\forall j>1\;\norm{\alpha}_j^2\leq C\},$$
where we denote by $\norm{\alpha}_j=\norm{\iota_j(\alpha)}$. When $d=1$, the condition that $\norm{\alpha}_j$ is uniformly bounded for $j>1$ is automatic. When $d>2$, this is obviously not the case, but since we impose this condition in the counting function, then \cite[Lemma 3.2]{SarnakXue91} still gives the same bound, which is
\begin{equation}\label{e:counting1}
N(x;\fraka)\ll_\epsilon \frac{x^{1+\epsilon}}{N(\fraka)^3}+\frac{x^{1/2+\epsilon}}{N(\fraka)^2}.
\end{equation}

In order to get good bounds for non-spherical representations we will require a more general counting function. We decompose $\{2,\ldots,d\}=J_1\cup J_2\cup J_3$ into three disjoint subsets. For $k\in \bbR^{J_1}$ with $k_j\geq 1$ and $\eta\in \bbR^{J_2}$ with $0<\eta_j\leq 2$ we define the counting function
\begin{equation}
N(x;k,\eta,\fraka)=\sharp\left\lbrace\begin{array}{ccl}
&\vline& \norm{\alpha}_1^2\leq x,\;\forall j\in J_1,\;|\Tr(\iota_j(\alpha))|<2     \\
 \alpha\in\calR^1(\fraka)&\vline & \forall\; j\in J_1,\; k\leq \norm{\alpha}_j^2\leq k+1\\
\alpha\neq  1 &\vline &\forall\;j\in J_2,\; 2-\eta\leq |\Tr(\iota_j(\alpha))|<2     \\
&\vline& \forall\;j\in J_2\cup J_3,\;\norm{\alpha}_j^2\leq C \end{array}\right\rbrace.
\end{equation}

Before estimating this counting function we prove a couple of lemmas counting points in $\calO_L$.
\begin{lem}\label{l:box}
Let $B\subset\bbR^n$ denote a box parallel to the axes and let $t_0\in \calO_L$.
Then for any ideal $\fraka\subset\calO_L$ we have
\[\#\{t\in \calO_L|t\equiv t_0\pmod{\fraka},\;\iota(t)\in B\}\leq \frac{\vol(B)}{N(\fraka)}+1,\]
where $\iota(t)=(\iota_1(t),\ldots,\iota_n(t))\in \bbR^n$ and $N(\fraka)=\# \calO_L/\fraka$.
\end{lem}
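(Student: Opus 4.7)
My plan is to prove the lemma by a direct arithmetic argument that exploits the multiplicativity of the ideal norm. The crucial input will be the standard fact that for any nonzero $s\in\fraka$, the principal ideal $s\calO_L$ is contained in $\fraka$, whence $N(\fraka)$ divides $|N_{L/\bbQ}(s)|=N(s\calO_L)$, and in particular $|N_{L/\bbQ}(s)|\geq N(\fraka)$.

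Write $B=\prod_{j=1}^n[a_j,b_j]$ and set $\ell_j=b_j-a_j$, so that $\vol(B)=\prod_j\ell_j$. Let $M$ denote the count to be estimated. The case $M\leq 1$ is trivial, so I assume $M\geq 2$ and list the elements $t_1,\dots,t_M$ in the order of increasing first coordinate $\iota_1(t_i)$. The consecutive differences $s_i:=t_{i+1}-t_i$ then belong to $\fraka\setminus\{0\}$, so the norm bound above applies: $|N_{L/\bbQ}(s_i)|=\prod_j|\iota_j(s_i)|\geq N(\fraka)$. Since both $\iota_j(t_i)$ and $\iota_j(t_{i+1})$ lie in $[a_j,b_j]$, one has the trivial bound $|\iota_j(s_i)|\leq\ell_j$ for every $j$. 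Using this for $j\geq 2$ and the ordering (which makes $\iota_1(s_i)\geq 0$), I get
\[\iota_1(s_i)\;\geq\;\frac{N(\fraka)\,\ell_1}{\vol(B)}.\]

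Finally, summing over $i=1,\dots,M-1$ telescopes the left-hand side to $\iota_1(t_M)-\iota_1(t_1)\leq\ell_1$, which gives $(M-1)N(\fraka)/\vol(B)\leq 1$, i.e. the claimed bound $M\leq 1+\vol(B)/N(\fraka)$. I do not anticipate any serious obstacle here; the conceptual content is simply that the norm lower bound on nonzero elements of $\fraka$ lets one convert the $n-1$ trivial coordinate bounds into a spacing estimate on the remaining coordinate, reducing the problem to a one-dimensional lattice-point count.
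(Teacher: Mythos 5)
Your proof is correct, and it uses the same key arithmetic input as the paper -- namely that any nonzero $s\in\fraka$ satisfies $|N_{L/\bbQ}(s)|\geq N(\fraka)$, because $(s)\subseteq\fraka$ forces $N(\fraka)\mid |N_{L/\bbQ}(s)|$ -- but deploys it along a genuinely different geometric route. The paper's proof asserts a reduction, without spelling it out, to the case $\vol(B)<N(\fraka)$ (implicitly by subdividing $B$ along one axis into about $\vol(B)/N(\fraka)+1$ sub-boxes each of volume below $N(\fraka)$) and then shows each small box contains at most one point of the residue class, since two such points $t_1,t_2$ would give $0<|N_{L/\bbQ}(t_1-t_2)|\leq\vol(B)<N(\fraka)$, contradicting divisibility. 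You instead order the points by $\iota_1$ (which is a strict order since $\iota_1$ is injective), use the $n-1$ trivial coordinate bounds on the consecutive difference $s_i$ to convert the norm lower bound into the spacing estimate $\iota_1(s_i)\geq N(\fraka)/\prod_{j\geq 2}\ell_j$, and telescope. This has the advantage of proving the stated $\vol(B)/N(\fraka)+1$ bound in one uniform stroke without the subdivision step. One small thing worth flagging: your displayed spacing bound $\iota_1(s_i)\geq N(\fraka)\ell_1/\vol(B)$ divides by $\vol(B)$, which fails in the degenerate case $\vol(B)=0$; but that case is trivial, since then some $\ell_j=0$ forces $\iota_j(s_i)=0$ and hence $s_i=0$ by injectivity of $\iota_j$, so $M\leq 1$. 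With that remark added, the argument is complete.
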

\begin{proof}
Without loss of generality we may assume that $\vol(B)<N(\fraka)$ and show that there is at most one such point.
Assume that both $\iota(t_1),\iota(t_2)\in B$ and $t_1\equiv t_2\pmod{\fraka}$. Since both $\iota(t_1),\iota(t_2)\in B$, then $$|N_{L/\bbQ}(t_1-t_2)|=\prod_j|\iota_j(t_1)-\iota_j(t_2)|\leq \vol(B)<N(\fraka).$$
On the other hand, since $t_1-t_2\in \fraka$, then $N_{L/\bbQ}(t_1-t_2)\in N(\fraka)\bbZ$ implying that $N_{L/\bbQ}(t_1-t_2)=0$ and hence $t_1=t_2$.
\end{proof}

\begin{lem}\label{l:quadratic}
For any $t\in \calO_L$
\[\sharp\{a,b\in\calO_L|a^2+b^2=t\}=O_\epsilon(|N_{L/\bbQ}(t)|^\epsilon).\]
\end{lem}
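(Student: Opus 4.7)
The plan is to factor $a^2+b^2 = (a+bi)(a-bi)$ in the CM field $K=L(i)$ (note $i\notin L$ since $L$ is totally real). Writing $\alpha = a+bi\in\calO_L[i]\subseteq\calO_K$ and $\bar\alpha$ for its image under the nontrivial element of $\Gal(K/L)$, the equation becomes $\alpha\bar\alpha=t$, i.e.\ $N_{K/L}(\alpha)=t$.

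The argument has two steps. First, I count the possible principal ideals $(\alpha)$ in the Dedekind domain $\calO_K$. Any such ideal is a divisor of the fixed ideal $(t)$, and the standard divisor bound in a number field gives
\[\#\{\text{ideal divisors of }(t)\text{ in }\calO_K\}\ll_\epsilon N_{K/\bbQ}((t))^\epsilon = |N_{L/\bbQ}(t)|^{2\epsilon},\]
using $N_{K/\bbQ}(t)=N_{L/\bbQ}(N_{K/L}(t))=N_{L/\bbQ}(t^2)$ for $t\in L$. After rescaling $\epsilon$, this is exactly the shape required by the lemma.

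The main point --- and the one step where I expect the only real subtlety --- is to show that each fixed ideal contributes only boundedly many $\alpha\in\calO_L[i]$ with $\alpha\bar\alpha=t$. Two such elements generating the same $\calO_K$-ideal differ by a unit $u\in\calO_K^\times$ with $u\bar u=1$, and these units form the kernel of the relative norm $N_{K/L}:\calO_K^\times\to\calO_L^\times$. By Dirichlet's unit theorem both groups have rank $n-1$ (here $L$ is totally real of degree $n$ and $K$ is totally imaginary of degree $2n$), while the image contains $(\calO_L^\times)^2$ because $v\in\calO_L^\times$ gives $N_{K/L}(v)=v^2$; this image therefore has finite index in $\calO_L^\times$, and a rank comparison forces the kernel to be finite of cardinality depending only on $L$.

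Combining the two inputs yields the claim, with an implicit constant depending only on the fixed field $L$. Passing from $\calO_K$ down to the possibly non-maximal order $\calO_L[i]$ is harmless: since $\calO_L[i]\subseteq\calO_K$, counting ideal divisors in $\calO_K$ only inflates the estimate, which is all one needs for an upper bound.
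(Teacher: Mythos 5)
Your proposal is correct and follows essentially the same route as the paper: factor $a^2+b^2=(a+bi)(a-bi)$ in $L(i)$, bound the number of ideal divisors of $(t)$ by the divisor bound, and observe that two generators $\alpha$ of the same ideal with $\alpha\bar\alpha=t$ differ by a unit $u$ with $u\bar u=1$, of which there are only finitely many. Your justification of that last finiteness via Dirichlet's unit theorem and a rank comparison for $N_{K/L}$ is a slightly more explicit version of what the paper simply asserts (one could equally invoke Kronecker's theorem, since $u\bar u=1$ in a CM field forces all archimedean absolute values of $u$ to equal $1$).
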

\begin{proof}
This is essentially \cite[Lemma 3.2]{SarnakXue91}; however, since we do not assume uniform bounds on $\iota_j(a),\iota_j(b)$ we will include the proof. Let $F=L(i)$ be a quadratic imaginary extension. We can write $a^2+b^2=(a+bi)(a-bi)$ and hence any solution to $a^2+b^2=t$ gives rise to an ideal factorization  $(t)=\fraka\bar\fraka$ with $\fraka$ the ideal generated by $a+bi$ in $\calO_F$ and $\bar\fraka$ its conjugate. The number of all ideal factorization $(t)=\fraka\mathfrak{b}$ is bounded by $O_\epsilon(|N_{L/\bbQ}(t)|^\epsilon)$. It remains to show that for a given decomposition $(t)=\fraka\bar\fraka$ the number of points $a,b\in \calO_L$ with $a^2+b^2=t$ and $(a+bi)=\fraka$ is uniformly bounded. Indeed, if $\tilde{a},\tilde{b}$ is another such pair, then $a+bi=u(\tilde{a}+\tilde{b}i)$ with $u$ some unit in $\calO_K$. Moreover, since $a^2+b^2=\tilde{a}^2+\tilde{b}^2$ then the unit $u$ satisfies that $u\bar{u}=1$, and there are only finitely such units in $F$.
\end{proof}

\begin{prop}\label{p:count}
For $k\in \bbR^{J_1},\eta\in \bbR^{J_2}$ as above let $|k|=\prod_{j\in J_1}(k_j+1)$ and $|\eta|=\prod_{j\in J_2} |\eta_j|$. Then
$$N(x;k,\eta,\fraka)\ll_\epsilon |\eta|\left( \frac{x^{1+\epsilon}}{N(\fraka)^3}+\frac{|k|^{\epsilon}x^{1/2+\epsilon}}{N(\fraka)^2}\right),$$
and in the special case where $J_1=\emptyset$ and $J_2\neq \emptyset$ we get a slightly better bound
$$N(x;\eta,\fraka)\ll_\epsilon \frac{|\eta|x^{1+\epsilon}}{N(\fraka)^3}.$$
\end{prop}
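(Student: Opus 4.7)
The proof adapts the scheme of \cite[Lemma 3.2]{SarnakXue91}, refined to account for the spectral constraints at the places in $J_1$ and the trace constraints at the places in $J_2$. Writing $\alpha = a + bI + cJ + dK \in \calR^1(\fraka)$, the strategy is to first fix $(b,d) \in \fraka \times \fraka$ satisfying the induced archimedean bounds, and then count admissible $(a,c)$.

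For the $(b,d)$ count, the conditions $\norm{\iota_j(\alpha)}^2 \leq R_j^2$ (with $R_1 = x^{1/2}$, $R_j = (k_j+1)^{1/2}$ for $j \in J_1$, and $R_j = O(1)$ elsewhere) translate via the explicit matrix form of $\iota_j$ into box constraints $|\iota_j(b)|, |\iota_j(d)| \ll R_j$. Lemma \ref{l:box} then gives $\#\{b\}, \#\{d\} \ll \frac{x^{1/2}|k|^{1/2}}{N(\fraka)} + 1$. For each fixed $(b,d)$, the norm equation yields the Pell-type equation $a^2 - qc^2 = t$ with $t = 1+pb^2-pqd^2 \in \calO_L$. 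Since $\iota_j(q)<0$ at every place, the field $F = L(\sqrt{q})$ is a totally imaginary CM extension of $L$, its norm-one unit group is finite, and the argument of Lemma \ref{l:quadratic} adapts to bound
\[ \#\{(a,c)\in\calO_L^2 : a^2 - qc^2 = t\} \ll_\epsilon |N_{L/\bbQ}(t)|^\epsilon \ll_\epsilon (x|k|)^\epsilon, \]
where the last step uses $|N_{L/\bbQ}(t)| \ll x|k|$ deduced from the $(b,d)$ bounds.

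Two further refinements produce the $|\eta|$ factor and the extra power of $N(\fraka)$. First, from $n(\alpha)=1$ and $b,c,d \in \fraka$ one deduces $(a-1)(a+1) \in \fraka^2$; when $\gcd(\fraka,2)=1$, the element $a+1 \equiv 2$ is a unit modulo $\fraka$, forcing $a \equiv 1 \pmod{\fraka^2}$ (the general case is handled by a minor modification). Second, the trace condition at each $j \in J_2$ restricts $\iota_j(a)$ to two intervals of total length $\eta_j$; combined with the remaining archimedean constraints on $a$, the resulting box has volume $O(x^{1/2}|\eta|)$, and Lemma \ref{l:box} applied with modulus $\fraka^2$ yields
\[ \#\{\text{admissible } a\} \ll \frac{x^{1/2}|\eta|}{N(\fraka)^2} + 1. \]
For each such $a$, the value of $c$ is determined up to sign by $c^2 = (a^2-t)/q$.

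The final step assembles these ingredients by bounding $\#\{(a,c)\mid (b,d)\}$ by the \emph{minimum} of the Lemma \ref{l:quadratic}-based estimate $(x|k|)^\epsilon$ and the box estimate $\frac{x^{1/2}|\eta|}{N(\fraka)^2}+1$, then multiplying by the $(b,d)$ count and tracking the cross terms. The main term $|\eta|x^{1+\epsilon}/N(\fraka)^3$ comes from the leading parts of the $(b,d)$ and $a$ counts, with the excess factor of $|k|$ absorbed as $|k|^\epsilon$ by switching to the Lemma \ref{l:quadratic} estimate where needed. The secondary term $|\eta||k|^\epsilon x^{1/2+\epsilon}/N(\fraka)^2$ arises from the ``$+1$'' remainders combined with Lemma \ref{l:quadratic}. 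For the special case $J_1=\emptyset$, the $(b,d)$ boxes lose their $|k|^{1/2}$ factor and Lemma \ref{l:quadratic} gives just $x^\epsilon$, so the secondary term disappears and one is left with the sharper $|\eta|x^{1+\epsilon}/N(\fraka)^3$. The main obstacle is the bookkeeping in this final combination: one must interleave the Lemma \ref{l:box} and Lemma \ref{l:quadratic} bounds so that the $|\eta|$ gain is preserved in every regime while the spectral weight is reduced from the naive $|k|^{1/2}$ of box counting down to $|k|^\epsilon$.
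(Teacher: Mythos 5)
The proposal has a genuine gap: the order of counting is wrong, and it cannot be fixed by the ``minimum'' trick.

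You fix $(b,d)$ first and bound $\#\{(b,d)\}\ll\left(\frac{x^{1/2}|k|^{1/2}}{N(\fraka)}+1\right)^2$, whose leading term is $\frac{x|k|}{N(\fraka)^2}$. You then multiply by a per-$(b,d)$ bound for $\#\{(a,c)\}$, taking the minimum of $(x|k|)^\epsilon$ and $\frac{x^{1/2}|\eta|}{N(\fraka)^2}+1$. Since every upper bound for a nonempty fiber is $\geq 1$, your total is $\gg\frac{x|k|}{N(\fraka)^2}$ whenever the $(b,d)$-boxes contain $\gg 1$ lattice points. But the target is $|\eta|\left(\frac{x^{1+\epsilon}}{N(\fraka)^3}+\frac{|k|^\epsilon x^{1/2+\epsilon}}{N(\fraka)^2}\right)$, which is smaller than $\frac{x|k|}{N(\fraka)^2}$ by the unbounded factor $\frac{|k|N(\fraka)}{|\eta|x^\epsilon}$ (recall $|k|\geq 1$ and $0<|\eta|\leq 2^{|J_2|}$, while $N(\fraka)$ is arbitrary). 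So the bound obtained is not the one claimed in the proposition, and no choice between your two per-$(b,d)$ estimates can repair this, because the overcount is already committed in the $(b,d)$ factor.

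The paper avoids this by summing in the opposite order: over $a$ (with $a-1\in\fraka^2$ and the $\eta$-interval constraints, contributing $\frac{\sqrt{x}|\eta|}{N(\fraka)^2}$), then over $c\in\fraka$ (contributing $1+\frac{\sqrt{x}}{\sqrt{|k|}N(\fraka)}$, using that the constraint $k_j\leq\norm{\alpha}_j^2\leq k_j+1$ together with $|\iota_j(a)|<1$ forces $\iota_j(c)$ into an interval of length $\asymp k_j^{-1/2}$), and finally bounding the inner $(b,d)$ sum by $(x|k|)^\epsilon$ via the quadratic-form lemma. The key structural point you missed is that the Lemma \ref{l:quadratic} count for the innermost variables is essentially $O_\epsilon(\cdot^\epsilon)$ and does \emph{not} use $b,d\in\fraka$ at all: the $\fraka$-saving on $b$ and $d$ is deliberately sacrificed, and the full $N(\fraka)^{-3}$ is harvested from the $a$- and $c$-sums instead. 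Putting $(b,d)$ on the outside forces you to pay the full box price for them, which is exactly what must be avoided. You also did not use the observation that when $J_2\neq\emptyset$ one must have $c\neq 0$ (and when $J_1=\emptyset$ the $\sqrt{|k|}$-length interval for $c$ becomes an $O(\sqrt{x})$-length one with the ``$+1$'' term removed), which is what gives the sharper bound in the special case $J_1=\emptyset$, $J_2\neq\emptyset$. Your observation that $L(\sqrt{q})$ replaces $L(i)$ in adapting Lemma \ref{l:quadratic} to $b^2-qd^2$ is correct and worth noting, but it is orthogonal to the main issue.
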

\begin{proof}
For $\alpha=a+bI+cJ+dK\in \calR^1(\fraka)$ we have
$$\norm{\alpha}_j^2\asymp |\iota_j(a)|^2+|\iota_j(b)|^2+ |\iota_j(c)|^2+|\iota_j(d)|^2.$$
The condition $n_\calA(\alpha)=a^2-pb^2-qc^2+pqd^2=1$ implies that $\norm{\alpha}_j^2\asymp |\iota_j(a)|^2+|\iota_j(c)|^2$ for $j\leq d$ and that $\norm{\alpha}_j$ is uniformly bounded for $j>d$.

Next, since $c,b,d\in \fraka$ we have that
$$(a+1)(a-1)=a^2-1=p(b^2+qd^2)+qc^2\in \fraka^2.$$
Since $a-1\in \fraka$ then $a+1\not\in \fraka$ and hence $a-1\in \fraka^2$
(we assume without loss of generality that $2\not\in \fraka$ as replacing $\fraka$ by $\frac{\fraka}{(2,\fraka)}$ will just change the implied constant).
We can thus bound our counting function by the sum
\begin{eqnarray*}
N(x;k,\fraka)&\ll &\mathop{\mathop{\mathop{\sum_{a-1\in\fraka^2}}_{\iota_1(a)\leq \sqrt{x}}}_{|\iota_j(a)|<1}}_{1-\eta_j\leq |\iota_j(a)|<1}
\mathop{\mathop{\mathop{\sum_{c\in\fraka}}_{|\iota_1(c)|\leq \sqrt{x}}}_{\sqrt{k}\leq \iota_j(c)\leq \sqrt{1+k}}}_{|\iota_j(c)|\leq C}\mathop{\mathop{\sum_{b,d\in \fraka}}_{p(b^2-qd^2)=}}_{a^2-qc^2-1}1\\
\end{eqnarray*}
By Lemma \ref{l:quadratic}, the inner sum is bounded by $O_\epsilon((x|k|)^\epsilon)$, and
by Lemma \ref{l:box}, the number of elements in the middle sum is bounded by $1+O(\frac{\sqrt{x}}{\sqrt{|k|}N(\fraka)})$. We can bound the number of elements in the outer sum by
$O(\frac{\sqrt{x}|\eta|}{N(\fraka^2)})$ (to do this, first add $a=1$ to the sum and get a bound of $1+O(\frac{\sqrt{x}|\eta|}{N(\fraka^2)})$ and then subtract one because we are not summing over $a=1$).
Combining these bounds we get
\[N(x;k,\eta,\fraka)\ll_\epsilon |\eta|\left(\frac{x^{1+\epsilon}}{N(\fraka)^3|k|^{1/2-\epsilon}}+\frac{|k|^\epsilon x^{\frac{1}{2}+\epsilon}}{N(\fraka)^2}\right).\]

Now, assume that $J_2\neq\emptyset$. If  $\alpha=a+bI+cJ+dK\in \calR^1(\fraka)$ satisfies the above condition, then we must have that $c\neq 0$. Indeed, if $c=0$, then for $j\in J_2$, we have $1=\iota_j(a^2-pb^2+pqd^2)\leq \iota_j(a)^2$ in contradiction to the constraint $|\iota_j(a)|<1$. Hence, in this case in the middle sum we are not summing over $c=0$.
If in addition $J_1=\emptyset$, then the middle sum is over $0\neq c\in \fraka$ such that
$(\iota_1(c),\ldots,\iota_n(c))\in [-\sqrt{x},\sqrt{x}]\times [-C,C]^{n-1}$. The number of such $c$'s including $c=0$ is bounded by $O(\frac{\sqrt{x}}{N(\fraka)})+1$ and since we are not summing over $c=0$ we get the bound $O(\frac{\sqrt{x}}{N(\fraka)})$.

\end{proof}

\section{The Selberg Trace Formula}
The upper bound in Theorem \ref{t:multiplicity1} is obtained via an application of the Selberg trace formula.
We refer to \cite[Sections 1-6]{Efrat87}, \cite[Chapter 3]{Hejhal76}
and \cite{Selberg95} for the full derivation of the trace formula in
this setting. For the readers convenience we recall here some basic facts and notations.

\subsection{Spherical transform}
For $m\in \bbZ$, let $\chi_m$ be the character of $K$ given by $\chi_m(k_\theta)=e^{im\theta}$.
We denote by $\calF_m\subset C^{\infty}(G)$ the space of functions satisfying
\[\forall\; k,k'\in K,\quad f(kgk')=\chi_m(kk')f(g).\]
The spherical transform on $\calF_m$ is defined by the integral
\begin{equation}\label{e:spherical}
 Sf(r)=\int_{G}f(g)\phi_{-m,\frac{1}{2}+ir}(g)dg=\int_{\bbH} f(p_z)\Im(z)^{\frac{1}{2}+ir}dz
\end{equation}
(when it converges), where $\phi_{m,s}\in \calF_m$ denoted the unique eigenfunction of the Casimir operator with eigenvalue $s(1-s)$ satisfying $\phi_{m,s}(1)=1$.
If $f\in\calF_m$ is supported on $B_\delta$, then $h(r)=Sf(r)$ is an even holomorphic function of uniform exponential type $\delta$, and the spherical transform gives a bijection between the space of compactly supported functions in $\calF_m$ and the space $\rm{PW}(\bbC)$ of even holomorphic functions of uniform exponential type.

For $f_1,f_2\in\calF_m$ their convolution $f_1*f_2\in\calF_m$ is given by
\[f_1*f_2(x)=\int_Gf_1(g)f_2(g^{-1}x)dg.\]
Under convolution the spherical transform satisfies
$$S(f_1*f_2)(r)=Sf_1(r)Sf_2(r).$$
Denote by $\check{f}(g)=\overline{f(g^{-1})}$, then $S(\check{f})(r)=\overline{Sf(\bar{r})}$; in particular, for any $f\in\calF_m$, we have that $S(f*\check{f})(r)=|Sf(r)|^2$ is positive on $\bbR\cup i\bbR$.

For $|m|>1$, let $\Phi_m=\phi_{m,|m|}$ denote the spherical function of weight $m$ and eigenvalue $|m|(1-|m|)$. This function is given by the formula
\begin{equation}\label{e:mSpherical}
\Phi_m(ka_tk')=\frac{\chi_m(kk')}{(\cosh\frac{t}{2})^{2|m|}}.
\end{equation}
By orthogonality relations, its spherical transform $S\Phi_m(r)=0$ unless $r=\pm i(|m|-\tfrac{1}{2})$ in which case it equals
$$S\Phi_m(i(|m|-\tfrac{1}{2})=\int_{G} |\Phi_m(g)|^2=\frac{4\pi}{2|m|-1}.$$
Note that when $|m|>1$, this function decays sufficiently fast so that the spherical transform absolutely converges.

\subsection{Trace formula}
Fix a weight $m=(m_1,\ldots,m_d)\in \bbZ^d$, and let $\chi_m(k)=\chi_{m_1}(k_1)\cdots\chi_{m_d}(k_d)$ denote the corresponding character of $K^d$. Let $L^2(\Gamma\bs G^d,m)$ denote the subspace of functions in $L^2(\Gamma\bs G)$  satisfying
\[\psi(gk)=\chi_m(k)\psi(g),\; \forall\; k\in K^d.\]
For $\lambda=(\lambda_1,\ldots,\lambda_d)\in \bbR^d$ let $V_\lambda(\Gamma,m)\subset L^2(\Gamma\bs G^d,m)$ denote the eigenspace
\[V_\lambda(\Gamma,m)=\{\psi\in L^2(\Gamma\bs G^d,m)|\Omega_j\psi+\lambda_j\psi=0\},\]
where $\Omega_j$ denotes the Casimir operator of $G$ acting on the $j$'th factor.

For $j=1,\ldots,d$ let $F_j\in\calF_{m_j}$ with $h_j=SF_j$ its spherical transform and let $F(g)=\prod_jF_j(g_j)$ and $h(r)=\prod_jh_j(r_j)$.
Taking trace in $L^2(\Gamma\bs G^d,m)$ of the kernel
$$F_\Gamma(g,g')=\sum_{\gamma\in \Gamma}F(g^{-1}\gamma g')$$
yields the trace formula
\begin{eqnarray}\label{e:trace}
\lefteqn{\sum_k \dim(V_{\lambda_k}(\Gamma,m))h(r_k)=\sum_{\gamma}\int_{\Gamma\bs G^d}F(g^{-1}\gamma g)dg=}\\
\nonumber =&&\vol(\Gamma\bs G)F(1)+\sum_{\{\gamma\}}\vol(\Gamma_\gamma\bs G^d_\gamma)\int_{G_\gamma^d\bs G^d} F(g^{-1}\gamma g)dg,
\end{eqnarray}
where the sum is over the set of all eigenvalues $\lambda_k=\frac{1}{4}+r_{k}^2$ of eigenfunctions in $L^2(\Gamma\bs G^d,m)$.

\subsection{Specialization}
To a representation $\pi=\pi_1\otimes\cdots \otimes\pi_d$ we attach the eigenvalue $\lambda=\lambda(\pi)\in\bbR^d$, such that the Casimir operator of $G$ acts on $\pi_j$ via multiplication by $\lambda_j$, specifically, $\lambda_j=s_j(1-s_j)$ when $\pi_j\cong \pi_{s_j}$ and $\lambda_j=|m_j|(1-|m_j|)$ when $\pi_j=\frakD_{m_j}$.
Let $\tilde{m}\in \bbZ^d$ such that $\tilde{m}_j\geq m_j$ (respectively $\tilde{m}_j\leq m_j$) whenever $m_j>0$ (respectively $m_j<0$). Since in any irreducible representation $\pi_j$ there is a unique (up to scaling) vector of $K$-type $\tilde{m}_j$, we have that
$$m(\pi,\Gamma)=\dim(V_\lambda(\Gamma,\tilde{m})).$$

In order to estimate $m(\pi,\Gamma)$ when $\pi$ is non-spherical in some factors we can use the trace formula, making sure that the $\tilde{m}_j$ are sufficiently large. However, this is rather wasteful and so we specialize our test functions to pick up specifically non-spherical representations of a prescribed type.

Assume that $\pi=\pi_1\otimes\cdots\otimes \pi_d$ with $\pi_j$ spherical for $j\leq d_0$ and $\pi_j\cong \frakD_{m_j}$ for $j\geq d_0$ with $|m|_j>1$ and let $m=(0,\ldots,0,m_{d_0+1},\ldots,m_d)\in\bbZ^d$. Now, for $j\leq d_0$ take $F_j\in \calF_0$ compactly supported and for $j>d_0$ take $F_j=\frac{2|m|-1}{4\pi}\Phi_{m_j}$ with $\Phi_{m_j}$ denoting the $m_j$-spherical function defined in (\ref{e:mSpherical}). With this choice of test function, the left-hand side of the trace formula is given by
\[\sum_{k} m(\pi_k,\Gamma)h(r_k(m)),\]
where the sum is over all representation $\pi_k=\pi_{k,1}\otimes\cdots\otimes\pi_{k,d}$ such that for $j\leq d_0$ $\pi_{k,j}$ is spherical with parameter $s=\frac{1}{2}+ir_{k,j}(m)$ and for $j>d_0$ and $\pi_{k,j}=\frakD_{m_j}$.

The left-hand side can be further expressed in terms of the functions $h_j=SF_j$ as follows: We have $F(1)=\prod_jF_j(1)$ where
\begin{equation}\label{e:trivial}
F_j(1)=\frac{1}{4\pi}\int_\bbR h_j(r)r\tanh(\pi r)dr,
\end{equation}
for  $j\leq d_0$ and  $F_j(1)=\frac{2|m_j|-1}{4\pi}$ for $d_0<j\leq d$. For the nontrivial conjugacy classes
$$\int_{G^d_\gamma\bs G^d}F(g^{-1}\gamma g)dg=\prod_j \int_{G_{\gamma_j}\bs G}F_j(g^{-1}\gamma_j g)dg,$$
where for $j\leq d_0$
\begin{equation}\label{e:Htransform1}
\int_{G_{\gamma_j}\bs G}F_j(g^{-1}\gamma_j g)dg=\left\lbrace\begin{array}{cc} \frac{\hat{h}_j(l)}{2\sinh(l/2)} & \gamma_j\sim a_{l}\\&\\
\frac{1}{2}\int_\bbR \hat{h}_j(u)\frac{\cosh(u/2)}{\cosh(u)-\cos(\theta)}du & \gamma_j\sim k_{\theta}\end{array}\right.\end{equation}
and for $d_0<j\leq d$
\begin{equation}\label{e:Htransform2}\frac{2|m_j|-1}{4\pi}\int_{G_{\gamma_j}\bs G}\Phi_{m_j}(g^{-1}\gamma_j g)dg=\left\lbrace\begin{array}{cc}
\frac{e^{im_j\theta}}{1-e^{\rm{sgn}(m_j) i\theta}} & \gamma_j\sim k_{\theta}\\&\\
0& \gamma_j\sim a_l \end{array}\right.\end{equation}
In particular, the only contribution to the trace formula comes from lattice points $\gamma\in \Gamma$ satisfying that $|\Tr(\gamma_j)|<2$ for $j>d_0$.
\begin{rem}
There is an alternative way to obtain this formula which is applicable also when some $|m_j|=1$. We note that if all $|m_j|=1$, there is an additional term of $(-1)^{d-d_0}h(i/2)$ entering on the spectral side (see \cite{kelmer10,Selberg95}).
\end{rem}

\section{Proof of Theorem \ref{t:multiplicity1}}
In this section, we obtain non-trivial upper bounds on the multiplicities of non-tempered representations occurring in $L^2(\Gamma(\fraka)\bs G^d)$, thus proving Theorem \ref{t:multiplicity1}.
For the sake of simplicity we write down the complete details in the case $d=2$. In this case there are two possibilities: either $\pi$ is spherical $\pi=\pi_{s_1}\otimes\pi_{s_2}$ with $s_1=\frac{1}{p(\pi)}\in(0,\frac{1}{2})$ and $s_2\in \tfrac{1}{2}+i\bbR^+$, or it is non-spherical $\pi=\pi_{s_1}\otimes \frakD_m$ for some $m\in \bbZ$. We will prove the theorem in each case separately.
\subsection{Some estimates}
Before we proceed with the proof we collect some estimates that we will need.
\begin{lem}\label{l:convolution}
Let $f\in \calF_0$ be supported on the ball $B_R$ and satisfy $|f(g)|\leq 1$ there.
Then $f*\check{f}$ is supported on $B_{2R}$ and satisfies $|f*\check{f}(g)|\ll e^{R-H(g)/2}$.
\end{lem}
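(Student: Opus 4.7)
The approach is to handle the two claims separately and reduce the pointwise bound to a hyperbolic volume estimate via bi-$K$-invariance. The support claim is immediate from the triangle inequality: if $(f \ast \check{f})(x) \neq 0$, there exists $g \in G$ with both $g$ and $x^{-1}g$ in $B_R$, so $H(x) \leq H(g) + H(x^{-1}g) \leq 2R$ by the triangle inequality for the hyperbolic distance.

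For the pointwise bound I would use that $f \in \calF_0$ is bi-$K$-invariant, hence so is $f \ast \check{f}$; it therefore suffices to estimate $|f \ast \check{f}(a_t)|$ for $t = H(x) \in [0, 2R]$. The crude bound $|f| \leq \mathbf{1}_{B_R}$ gives $|f \ast \check{f}(a_t)| \leq \vol(B_R \cap a_t B_R)$. Writing $g = k_1 a_s k_2$ in $KA^+K$ coordinates with Haar measure $2\pi \sinh(s)\, ds\, dk_1\, dk_2$, and noting by right $K$-invariance that $H(a_{-t} g) = H(a_{-t} k_1 a_s)$, a direct matrix computation (with $k_1 = k_\theta$) yields
\[
\cosh H(a_{-t} k_\theta a_s) = \cosh(s)\cosh(t) - \cos(\theta)\,\sinh(s)\sinh(t).
\]
Thus $a_t^{-1}g \in B_R$ becomes $\cos\theta \geq c_0(s,t) := (\cosh(s)\cosh(t) - \cosh(R))/(\sinh(s)\sinh(t))$, and the volume collapses to
\[
\vol(B_R \cap a_t B_R) = 2\int_{\max(0, t-R)}^{R} \sinh(s)\,\arccos(c_0(s,t))\, ds.
\]

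The key analytic input is the elementary inequality $\arccos(c_0) \leq \pi\sqrt{(1-c_0)/2}$ together with the algebraic identity $1 - c_0 = (\cosh(R) - \cosh(s-t))/(\sinh(s)\sinh(t))$, which reduce the estimate to
\[
\vol(B_R \cap a_t B_R) \ll \int_{\max(0, t-R)}^{R} \sqrt{\sinh(s)/\sinh(t)}\,\sqrt{\cosh(R) - \cosh(s-t)}\, ds.
\]
For $t \geq 1$, the elementary estimates $1/\sqrt{\sinh(t)} \ll e^{-t/2}$, $\sqrt{\cosh(R) - \cosh(s-t)} \ll e^{R/2}$ and $\int_0^R \sqrt{\sinh(s)}\, ds \ll e^{R/2}$ multiply out to $\ll e^{R - t/2}$; for $t \leq 1$ the trivial bound $\vol(B_R) \ll e^R$ already lies within tolerance.

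The main obstacle is extracting the sharp factor $e^{-t/2}$. A naive bound $\arccos \leq \pi$ would give only the trivial volume estimate $e^R$, losing the entire saving in $t$; the argument hinges on the quadratic vanishing $\arccos(c_0) \sim \sqrt{1-c_0}$ near $c_0 = 1$, which transfers one factor of $1/\sqrt{\sinh(t)}$ — precisely the $e^{-t/2}$ — from the angular measure to the final bound. A minor bookkeeping point is the edge case $c_0 \leq -1$ (occurring when $s + t < R$), where the angular integration is unrestricted but the $s$-range is short enough that the contribution is $\ll e^{R - t}$, still within the claimed bound.
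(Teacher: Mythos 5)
Your proof is correct and follows essentially the same route as the paper: both reduce the pointwise bound to the angular measure of $\{\theta : H(a_{-t}k_\theta a_s)\le R\}$ via the $KA^+K$ decomposition and the identity $\cosh H(a_{-t}k_\theta a_s)=\cosh s\cosh t-\cos\theta\sinh s\sinh t$, then integrate in $s$. The only cosmetic differences are that you package $|f*\check f(a_t)|\le\vol(B_R\cap a_tB_R)$ up front and invoke Jordan's inequality $\arccos c_0\ll\sqrt{1-c_0}$ uniformly, where the paper instead splits the $s$-integration at $s=R-t$ and estimates the angular measure by $O(1)$ and $O(e^{(R-t-s)/2})$ on the two ranges.
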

\begin{proof}
Using the $KA^+K$ decomposition,
\begin{eqnarray*}
|f*\check{f}(a_t)|&\leq &\int_0^{2\pi}\int_0^\infty |f(a_{t'})||f(a_{-t}k_\theta a_{t'})|\sinh(t')dt'd\theta\\
&\leq & \int_0^R\sinh(t')\left(\int_{H(a_{-t}k_\theta a_{t'})\leq R}d\theta\right)dt'.
\end{eqnarray*}
From the equality $2\cosh(H(g))=\Tr(g^tg)$ we get that
\[\cosh(H(a_{-t}k_\theta a_{t'}))=\sin^2(\theta/2)\cosh(t+t')+\cos^2(\theta/2)\cosh(t-t').\]
Now if $t\geq 2R$ then $t-t'>R$ and hence $H(a_{-t}k_\theta a_{t'})\geq R$ for all $t'\leq R$.
When $t\leq 2R$ and $t'>R-t$ the condition $H(a_{-t}k_\theta a_{t'})\leq R$ is equivalent to
\[\sin^2(\theta)\leq \frac{\cosh(R)-\cosh(t-t')}{\cosh(t+t')-\cosh(t-t')}\asymp e^{R-t-t'}.\]
The inner integral is hence bounded by
\[\int_{H(a_{-t}k_\theta a_{t'})\leq R}d\theta\ll \int_{\sin^2(\theta)\leq e^{R-t-t'}}d\theta\ll\left\lbrace\begin{array}{cc} 1 & t'\leq R-t\\ e^{\frac{R-t-t'}{2}} & t'>R-t\end{array}\right.\]
We thus get that for $0\leq t\leq R$,
\[|f*\check{f}(a_t)|\ll \int_0^{R-t} e^{t'}dt+e^{\frac{R-t}{2}}\int_{R-t}^R e^{t'/2}dt'\leq e^{R-t/2},\]
and for $R\leq t\leq 2R$
\[|f*\check{f}(a_t)|\ll e^{\frac{R-t}{2}}\int_{0}^R e^{t'/2}dt'\leq e^{R-t/2}.\]
\end{proof}

%\begin{lem}\label{l:spherical}
%Let $f\in \calF_0$ be supported on $B_R$ and satisfy $0\leq f(g)\leq 1$ and $f(g)=1$ on $B_{R-1}$.
%Then for $\tau\in (0,\frac{1}{2})$ we have $|Sf(i\tau)|\asymp e^{(\tau+1/2)R}$.
%\end{lem}
%\begin{proof}
%Write
%\begin{eqnarray*}Sf(i\tau)&=&\int_G f(g)\phi_{0,\frac{1}{2}-\tau}(g)dg=2\pi\int_0^{R}f(a_t)\phi_{0,\frac{1}{2}-\tau}(a_t)\sinh(t)dt\end{eqnarray*}
%For $s\in(0,\frac{1}{2})$ real the spherical function $\phi_{0,s}(a_t)$ is real, positive, and decays like $\phi_{0,s}(a_t)\asymp e^{-st}$.
%We thus get that
%\begin{eqnarray*}
%\int_0^{R}f(a_t)\phi_{0,\frac{1}{2}-\tau}(a_t)\sinh(t)dt\geq\int_0^{R-1}\phi_{0,\frac{1}{2}-\tau}(a_t)\sinh(t)dt\gg e^{(\tau+\frac{1}{2})R}.
%\end{eqnarray*}
%and
%\begin{eqnarray*}
%\int_0^{R}f(a_t)\phi_{0,\frac{1}{2}-\tau}(a_t)\sinh(t)dt\leq \int_0^{R}\phi_{0,\frac{1}{2}-\tau}(a_t)\sinh(t)dt\ll e^{(\tau+\frac{1}{2})R}.
%\end{eqnarray*}
%\end{proof}

\begin{lem}\label{l:Bm}
For any $m\in\bbZ$ let $\Phi_m=\phi_{m,|m|}$ denote the $m$-spherical function with eigenvalue $|m|(1-|m|)$.
Let $\Omega\subset G$ denote a compact set, then for any $\gamma\in G$ with $|\Tr(\gamma)|<2-\tfrac{1}{\sqrt{m}}$ we have
\[\int_{\Omega}|\Phi_m(g^{-1}\gamma g)|dg\leq \frac{\log_2{m}}{\sqrt{m}}+ \frac{\vol(\Omega)}{m}.\]
\end{lem}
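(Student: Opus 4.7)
The plan is to reduce the integral over $G$ to a two-dimensional integral on $\bbH$, find a closed form for it in terms of the hyperbolic distance to the fixed point of $\gamma$, and then split by the level sets of the integrand.

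First I would use the $G=PK$ decomposition, writing $g = p_z k_\phi$ with $z = g\cdot i\in\bbH$. Since $|\Phi_m|$ is $K$-bi-invariant and $H(p_z^{-1}\gamma p_z) = d(z,\gamma z)$ (using $G$-invariance of the hyperbolic metric), the integrand $|\Phi_m(g^{-1}\gamma g)|$ depends only on $z$. Integrating out the $K$-variable gives
\[\int_\Omega |\Phi_m(g^{-1}\gamma g)|\,dg \;=\; \int_\bbH F(z)\mu(z)\,dz, \qquad F(z):=|\Phi_m(p_z^{-1}\gamma p_z)|,\]
where $\mu(z)=\vol\{k\in K : p_z k\in\Omega\}$ satisfies $0\le \mu\le 1$ and $\int_\bbH \mu\,dz=\vol(\Omega)$ under the paper's Haar normalization.

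Next I would make $F$ explicit. Since $|\Tr\gamma|<2$, $\gamma$ is elliptic, given by hyperbolic rotation by some angle $\theta$ about its fixed point $z_\gamma\in\bbH$, with $\Tr\gamma=2\cos(\theta/2)$. The classical identity
\[\sinh\bigl(d(z,\gamma z)/2\bigr)=|\sin(\theta/2)|\sinh\bigl(d(z,z_\gamma)\bigr),\]
combined with $|\Phi_m(h)|=\cosh(H(h)/2)^{-2|m|}$, yields
\[F(z)=\bigl(1+s\sinh^2 d(z,z_\gamma)\bigr)^{-|m|}, \qquad s:=\sin^2(\theta/2).\]
The hypothesis $|\Tr\gamma|<2-1/\sqrt m$ translates into $1-|\cos(\theta/2)|>1/(2\sqrt m)$, which forces $s=(1-\cos(\theta/2))(1+\cos(\theta/2))>1/(2\sqrt m)$ (with the lemma trivial when $m$ is too small to make this non-vacuous).

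Finally I would split by a threshold $t>0$: since $\|\mu\|_\infty\le 1$ and $\int\mu=\vol(\Omega)$,
\[\int_\bbH F\mu\,dz \;\le\; t\int_\bbH \mu\,dz + \vol\{z:F(z)>t\} \;\le\; t\,\vol(\Omega)+\vol\{z:F(z)>t\}.\]
The superlevel set $\{F>t\}$ is the hyperbolic disk around $z_\gamma$ of radius $\rho_t$ defined by $\sinh^2\rho_t=(t^{-1/m}-1)/s$, whose area $2\pi(\cosh\rho_t-1)$ is bounded by a constant multiple of $(t^{-1/m}-1)/s$. Taking $t=1/m$ gives $m^{1/m}-1\ll(\log m)/m$, and combining with $s>1/(2\sqrt m)$ yields $\vol\{F>1/m\}\ll(\log m)/\sqrt m$, producing the claimed bound after absorbing the constant into $\log_2 m$.

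The main obstacle is really only bookkeeping: one must pin down the constant in the area estimate of the superlevel set sharply enough to match $\log_2 m/\sqrt m$ rather than a slightly worse $C\log m/\sqrt m$. The geometric content — the rotation formula and the $K$-bi-invariance that turns the $G$-integral into an $\bbH$-integral — is standard, and the level-set split is the natural way to interpolate between the two regimes (pointwise bound by $1$ on a small disk around $z_\gamma$ versus pointwise bound by $t$ elsewhere).
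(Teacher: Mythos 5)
Your proof is correct and is essentially the same argument as the paper's: both conjugate the elliptic $\gamma$ to a rotation, express $|\Phi_m(g^{-1}\gamma g)|$ through $\sin^2(\theta/2)\sinh^2(\text{distance to fixed point})$, and split into a small ball around the fixed point (trivial bound $|\Phi_m|\leq 1$, area $\approx\eta^2$) versus the complement (pointwise decay $\leq 1/m$). Your level-set split over $\bbH$ with threshold $t=1/m$ is just a reformulation of the paper's $KA^+K$ split at radius $\eta$ with $\eta^2=\log_2 m/\sqrt m$; the constant bookkeeping you flag is indeed loose, but the paper's own proof drops the same $2\pi$ factor in $\vol(B_\eta)$, and the lemma is only ever applied with an implied constant.
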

\begin{proof}
Let $\tau\in G$ such that $\gamma=\tau^{-1}k_\theta \tau$. Fix a small parameter $\eta$ and separate $\Omega=\Omega_{\eta}\cup\Omega_{\eta}^c$ where
$$\Omega_{\eta}=\Omega\cap\{\tau ka_tk'|0\leq t\leq \eta,k,k'\in K\}.$$
For $g\in\Omega_\eta^c$, we can write $g=\tau ka_tk'$ with $t>\eta$, and hence,
$H(g^{-1}\gamma g)=H(a_{-t}k_\theta a_t)$. Now
\[\cosh(H(a_{-t}k_\theta a_t))=\tfrac{1}{2}\norm{a_{-t}k_\theta a_t}^2=1+\sin^2(\tfrac{\theta}{2})(\cosh(2t)-1).\]
The condition $|\Tr(\gamma)|<2-\tfrac{1}{\sqrt{m}}$ implies that $\sin^2(\theta/2)>\frac{1}{\sqrt{m}}$ and the condition $t>\eta$ implies that $\cosh(2t)-1>2\eta^2$. Taking $\eta^2=\frac{\log_2{m}}{\sqrt{m}}$ we get that on $\Omega_{\eta}^c$,
\[|\Phi_m(g^{-1}\gamma g)|=(\cosh^2(\frac{H(a_{-t}k_\theta a_t)}{2}))^{-m}\leq (1+\frac{\log_2{m}}{m})^{-m}\leq \frac{1}{m}.\]
The main contribution then comes from $\Omega_{\eta}$ and is bounded by
\[\int_{\Omega_\eta}|\Phi_m(g^{-1}\gamma g)|dg\leq\int_{\Omega_\eta}dg\leq\int_0^\eta\sinh(t)dt\leq\eta^2.\]
\end{proof}
\subsection{Proof for spherical case}
Let $\pi=\pi_{s_1}\otimes \pi_{s_2}$ with $s_1=\frac{1}{2}-\tau\in(0,\tfrac{1}{2})$ $s_2=\tfrac{1}{2}+it$ so that $T(\pi)\asymp |t|+1$ and $\tau=\frac{1}{2}-\frac{1}{p(\pi)}$.

Let $f_1,f_2\in \calF_0$ be supported on $B_R$ and $B_{1/2}$ respectively, with $0\leq f_j(g)\leq 1$ and $f_1(g)=1$ on $B_{R-1}$ and let $h_j=SF_j$ denote their spherical transforms.
Let $F_1(g)=f_1*\check{f}_1$ and $F_2(g)=f_2*\check{f_2}$ so that their spherical transforms $SF_j=|h_j|^2$ are positive on $\bbR\cup i\bbR$. Following the same arguments of \cite{SarnakXue91}, we can use the trace formula with the test function $F(g)=F_1(g_1)F_2(g_2)$ together with the counting estimate (\ref{e:counting1}) to get the multiplicity bound. However, due to the fast decay of the spherical transform $h_2(r)$ as $r\to\infty$ using such a test function directly will produce a very poor dependence on the parameter $|t|\asymp T(\pi)$.

In order to get a better dependence we consider the trace formula with a shifted test function.
Further assume that $h_2$ itself is positive and normalized so that $h_2(0)=1$. We consider the shifted test function
$h_{2,t}(r)=\frac{h_2(r+t)+h_2(r-t)}{2}$. In the following lemma we control how the trace formula changes when we replace $|h_2|^2$ with $|h_{2,t}|^2$:

\begin{lem}\label{l:translate}
Let $f_1,f_2,F_1,F_2\in \calF_0$ as above with $h_j=Sf_j$ their spherical transforms.
Let $h_{2,t}(r)=\frac{h_2(r+t)+h_2(r-t)}{2}$ and $h_t(r)=h_1(r_1)h_{2,t}(r_2)$.
Then
\begin{eqnarray*}
\sum_{k} m(\pi_k,\Gamma)|h_t(r_{k})|^2 &\ll& \vol(\Gamma\bs G^d)F_1(1)(|t|+1)\\
&+&{\sum_{\gamma\neq 1}}\int_{\Gamma\bs G^d}|F(g^{-1}\gamma g)|dg.
\end{eqnarray*}
\end{lem}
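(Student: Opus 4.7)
The plan is to apply the trace formula (\ref{e:trace}) to an auxiliary test function whose spherical transform majorizes $|h_t(r)|^2$ and whose Harish-Chandra transform is pointwise dominated by that of $F_2$, so the geometric side compares directly to the one for $F$. The starting observation is the elementary inequality $(a+b)^2\leq 2(a^2+b^2)$, which gives
$$|h_{2,t}(r)|^2 \leq H_t(r) := \tfrac{1}{2}\bigl(h_2(r+t)^2 + h_2(r-t)^2\bigr).$$
Because $H_t$ is even in $r$ and of Paley-Wiener type $1$ (inherited from $h_2^2 = SF_2$), there is a unique $G^t_2\in\calF_0$ with support in $B_1$ such that $SG^t_2 = H_t$. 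Writing the spherical transform as the Fourier transform of the Harish-Chandra transform and using that $(e^{itu}\hat{f}_2)*(e^{itu}\hat{f}_2)(v) = e^{itv}\widehat{F_2}(v)$, one computes explicitly
$$\widehat{G^t_2}(u) = \cos(tu)\,\widehat{F_2}(u),$$
which in particular is supported in $[-1,1]$ and satisfies $|\widehat{G^t_2}(u)|\leq \widehat{F_2}(u)$ pointwise.

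Next, apply the trace formula (\ref{e:trace}) to the test function $\tilde F(g) = F_1(g_1) G^t_2(g_2)$. The spectral side equals $\sum_k m(\pi_k,\Gamma) h_1(r_{k,1})^2 H_t(r_{k,2})$, which already dominates $\sum_k m(\pi_k,\Gamma)|h_t(r_k)|^2$ by the inequality above. The identity contribution is $\vol(\Gamma\bs G^d) F_1(1) G^t_2(1)$, and by Plancherel (\ref{e:trivial}) followed by the substitutions $r\mapsto r\mp t$ in the two halves of $H_t$,
$$G^t_2(1) = \frac{1}{8\pi}\int_\bbR h_2(r)^2\bigl[(r-t)\tanh(\pi(r-t)) + (r+t)\tanh(\pi(r+t))\bigr]dr \ll |t|+1,$$
since $h_2$ is Schwartz on $\bbR$ with $\int h_2(r)^2(|r|+1)\,dr = O(1)$.

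For the non-identity contributions, we exploit that $f_2\ge 0$ and $\check f_2 = f_2$ on $\calF_0$ force $F_2\ge 0$, and hence $\widehat{F_2}(u)\ge 0$. Combined with $|\widehat{G^t_2}(u)|\le \widehat{F_2}(u)$, and recalling that the hyperbolic and elliptic orbital integrals in (\ref{e:Htransform1}) are expressed as integrals of the Harish-Chandra transform against nonnegative kernels ($\delta_\ell/(2\sinh(\ell/2))$ and $\frac{1}{2}\cosh(u/2)/(\cosh u - \cos\theta)$ respectively), we obtain for every nontrivial semisimple $\gamma\in G$,
$$\Bigl|\int_{G_\gamma\bs G} G^t_2(g^{-1}\gamma g)\,dg\Bigr| \;\leq\; \int_{G_\gamma\bs G} F_2(g^{-1}\gamma g)\,dg.$$
Multiplying by the nonnegative $F_1$-orbital integral in the first factor and summing the trace formula's geometric side (using its equivalent form as a sum over $\gamma\in\Gamma$ and integral over $\Gamma\bs G^d$) then bounds the non-identity contribution for $\tilde F$ in absolute value by the stated $\sum_{\gamma\neq 1}\int_{\Gamma\bs G^d}|F(g^{-1}\gamma g)|dg$.

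The main point to be careful about is the faithful transfer of the pointwise comparison $|\widehat{G^t_2}|\le\widehat{F_2}$ to the orbital integrals: this rests precisely on the positivity $\widehat{F_2}\ge 0$ (from $F_2\ge 0$) together with the positivity of the hyperbolic and elliptic kernels in (\ref{e:Htransform1}). Once these sign considerations are verified, combining the identity estimate and the geometric comparison yields the lemma.
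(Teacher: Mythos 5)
Your strategy runs parallel to the paper's but introduces an unnecessary majorant that creates a genuine gap on the spectral side. The paper takes $f_{2,t}=S^{-1}h_{2,t}$ and $F_{2,t}=f_{2,t}*\check f_{2,t}$, so that $SF_{2,t}=|h_{2,t}|^2$ exactly; the spectral side of the trace formula for $F_1\otimes F_{2,t}$ is then \emph{identically} $\sum_k m(\pi_k,\Gamma)|h_t(r_k)|^2$, with every term nonnegative. You instead replace $|h_{2,t}|^2$ by $H_t(r)=\tfrac12\bigl(h_2(r+t)^2+h_2(r-t)^2\bigr)$ and take $SG_2^t=H_t$. The inequality $|h_{2,t}(r)|^2\leq H_t(r)$, which you invoke to claim the spectral side for $G_2^t$ dominates the desired sum, holds only when $h_2(r\pm t)$ are real, i.e.\ for $r\in\bbR$. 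On the complementary part of the spectrum, $r=i\sigma$ with $\sigma\in(0,\tfrac12]$, set $h_2(t+i\sigma)=a+bi$: since $h_2$ is even and real on $\bbR$, a computation gives $h_{2,t}(i\sigma)=\operatorname{Re}h_2(t+i\sigma)=a$ and $H_t(i\sigma)=\operatorname{Re}\bigl[h_2(t+i\sigma)^2\bigr]=a^2-b^2$. Thus $H_t(i\sigma)=|h_{2,t}(i\sigma)|^2-b^2\leq |h_{2,t}(i\sigma)|^2$, and the inequality \emph{reverses} (and $H_t$ may even be negative there). Consequently the spectral side $\sum_k m(\pi_k,\Gamma)|h_1(r_{k,1})|^2 H_t(r_{k,2})$ need not dominate $\sum_k m(\pi_k,\Gamma)|h_t(r_k)|^2$, since the complementary series contributions in the second factor are under-counted or of the wrong sign.

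The rest of your argument is sound and mirrors the paper: the identity term is $O(|t|+1)$ by the substitution you describe, and the geometric comparison via the pointwise bound on the Harish-Chandra transform together with the positivity of $\widehat{F_2}$ and of the hyperbolic and elliptic kernels in (\ref{e:Htransform1}) is exactly what the paper does (the paper uses the slightly different Harish-Chandra transform $(\cos(t\cdot)\hat h_2)*(\cos(t\cdot)\hat h_2)$ rather than your $\cos(tu)(\hat h_2*\hat h_2)$, but both are bounded by $\widehat{F_2}$). The fix is simply to abandon the majorant $H_t$ and apply the trace formula to the convolution square $F_{2,t}=f_{2,t}*\check f_{2,t}$ directly, which makes the spectral side exact and nonnegative with no need to compare $H_t$ and $|h_{2,t}|^2$.
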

\begin{proof}
Let $f_{2,t}=S^{-1}h_{2,t}$ denote the inverse spherical transform of the shifted function and let $F_{2,t}=f_{2,t}*\check{f}_{2,t}$. The trace formula for the shifted function then reads

\begin{eqnarray*}
\lefteqn{\sum_{k} m(\pi_k,\Gamma)|h_t(r_k)|^2=\vol(\Gamma\bs G^d)F_1(1)F_{2,t}(1)}\\
&&+\sum_{\{\gamma\}\neq 1}\vol(\Gamma_\gamma\bs G^d_\gamma)\int_{G_{\gamma_1}\bs G} F_{1}(g^{-1}\gamma_1 g)dg\int_{G_{\gamma_2}\bs G} F_{2,t}(g^{-1}\gamma_2 g)dg.
\end{eqnarray*}
Using (\ref{e:trivial}), we can bound
\begin{eqnarray*}
F_{2,t}(1)&=&\frac{1}{2\pi}\int_{\bbR} |h_{2,t}(r)|^2r\tanh(\pi r) dr\\
&\ll& |t|\int_\bbR |h_2(r)|^2dr+\int_\bbR |h_2(r)|^2|r|dt\ll |t|+1
\end{eqnarray*}
Next, note that the Fourier transform $\hat{h}_{2,t}$ satisfies
$$|\hat{h}_{2,t}(u)|=|\hat{h}_2(u)\cos(tu)|\leq |\hat{h}_2(u)|=\hat{h}_2(u),$$
where the positivity of $\hat{h}_2(u)$ follows from the positivity of $f_2$ and (\ref{e:Htransform1}).
Consequently, we also have that the Fourier transform of $|h_{2,t}|^2$ (which is the convolution of $\hat{h}_{2,t}$ with itself) is bounded by the Fourier transform of
$|h_2|^2$. Hence, from the explicit formulas (\ref{e:Htransform1}) for the period integrals, we see that in both the hyperbolic and elliptic cases,
\[\big|\int_{G_{\gamma_2}\bs G} F_{2,t}(g^{-1}\gamma_2 g)dg\big|\leq \int_{G_{\gamma_2}\bs G} F_{2}(g^{-1}\gamma_2 g)dg= \int_{G_{\gamma_2}\bs G} |F_{2}(g^{-1}\gamma_2 g)|dg.\]

We thus get that
\begin{eqnarray*}
\sum_{k} m(\pi_k,\Gamma)|h_t(r_k)|^2&\ll& \vol(\Gamma\bs G)F_1(1)(|t|+1)\\
&+&\sum_{\{\gamma\}}\int_{\G_{\gamma}\bs G^d} |F(g^{-1}\gamma g)|dg.
\end{eqnarray*}
Unfolding the sum over the conjugacy classes back to a sum over the lattice elements
concludes the proof.
\end{proof}
\begin{rem}
We remark that the function $F(g)$ above is positive so that the absolute value on the right hand side is redundant.
However, when $d>2$, we will use this lemma when in some factors the functions are not assumed to be positive.
We take the absolute value in order to make the statement (and the proof) applicable to that setting as well.
\end{rem}

We now proceed with the proof of Theorem \ref{t:multiplicity1}.
With $F_1,F_2,h_1,h_2,h_{2,t}$ as above, from the trace formula and the positivity of $|h_1|^2$, $|h_{2,t}|^2$,
and the observation that $|h_{2,t}(t)|\geq \tfrac{1}{2}$ we have
\begin{eqnarray*}
m(\pi,\Gamma(\fraka))|h_1(i\tau)|^2&\leq& 4\sum_k m(\pi_k,\Gamma(\fraka))|h_t(r_k)|^2\\
&\ll&  (|t|+1)V(\fraka)F_1(1)+ \sum_{1\neq\gamma\in \Gamma(\fraka)}\int_{\Gamma(\fraka)\bs G^2}|F(g^{-1}\gamma g)|dg\\
&=& V(\fraka)\bigg((|t|+1)F_1(1)+\sum_{1\neq \gamma\in \Gamma(\fraka)}\int_{\Gamma\bs G^2}|F(g^{-1}x g)|dg\bigg).\\
%&\leq & V(\fraka)\sum_{\gamma\in \Gamma(\fraka)}\int_{B_{\frac{\delta}{2}}F_1(g_1^{-1}\gamma_1 g_1)dg\int_{B_{\frac\delt 2}}F_2(g_2^{-1}\gamma_2 g_2)dg_2\\
\end{eqnarray*}
By Lemma \ref{l:convolution}, we can bound $F_1(1)\ll e^{R}$. The sum over the nontrivial elements now has no dependence on $t$, and we can proceed as in \cite{SarnakXue91} to get
\begin{equation}\label{e:lsum1}
\sum_{1\neq \gamma\in \Gamma(\fraka)}\int_{\Gamma\bs G^2}F(g^{-1}\gamma g)dg
\ll_\epsilon  e^R(\frac{e^{R(1+\epsilon)}}{V(\fraka)}+\frac{e^{2\epsilon R}}{V(\fraka)^{2/3}}).
\end{equation}
Indeed, by Lemma \ref{l:convolution}, we can bound
\begin{eqnarray*}
\int_{\Gamma\bs G^2}|F(g^{-1}x g)|dg%&\leq& \int_{B_\delta}F_1(g_1^{-1}\gamma_1 g_1)dg\int_{B_\delta}F_2(g_2^{-1}\gamma_2 g_2)dg_2\\
&\ll& \left\{\begin{array}{cc} e^{R-H(\gamma_1)/2}& H(\gamma_1)\leq 2R,H(\gamma_2)\leq 1\\&\\
0& \mbox{otherwise.}\end{array}\right..
\end{eqnarray*}
Summing over all lattice points, we get
\begin{eqnarray*}
\sum_{1\neq \gamma\in \Gamma(\fraka)}\int_{\Gamma\bs G^2}F(g^{-1}\gamma g)dg
&\ll& e^R\mathop{\sum_{\gamma\in \Gamma(\fraka)}}_{ H(\gamma_1)\leq 2R,H(\gamma_2)\leq 1}e^{-H(\gamma_1)/2}\\
&\ll& e^R\int^{2R} e^{-t/2}d\tilde{N}(t,\fraka)\\
&\ll& e^R \int^{2R} e^{-t/2}\tilde{N}(t,\fraka)dt\\
\end{eqnarray*}
with $\tilde{N}(t;\fraka)=N(e^t;\fraka)$, and the bound $\tilde{N}(t;\fraka)\ll_\epsilon \frac{e^{t(1+\epsilon)}}{V(\fraka)}+\frac{e^{t(1/2+\epsilon)}}{V(\fraka)^{2/3}}$ gives (\ref{e:lsum1}).

We thus have
\begin{eqnarray*}
m(\pi,\Gamma(\fraka))|h_1(i\tau)|^2\ll_\epsilon  V(\fraka)e^R(|t|+1+\frac{e^{R(1+\epsilon)}}{V(\fraka)}+\frac{e^{2\epsilon R}}{V(\fraka)^{2/3}})
\end{eqnarray*}
For $s=\tfrac{1}{2}-\tau\in(0,\frac{1}{2})$ we have $|h_1(i\tau)|^2\asymp_s e^{(2\tau+1)R}$.
This follows from (\ref{e:spherical}) together with the positivity and rate of decay of the spherical function $\phi_{0,s}(a_t)\asymp_s e^{-st}$ (see also \cite[12.8]{Iwaniec02} for a precise formula).
Hence, dividing by $|h_1(i\tau)|^2$ we get
\begin{eqnarray*}
m(\pi,\Gamma(\fraka))\ll  e^{-2\tau R} (V(\fraka)(|t|+1)+e^{R(1+\epsilon)}+V(\fraka)^{1/3}e^{2\epsilon R})
\end{eqnarray*}
and setting $e^{R}=(|t|+1)V(\fraka)=T(\pi)V(\fraka)$ gives
\[m(\pi,\Gamma(\fraka))\ll_\epsilon (T(\pi)V(\fraka))^{\frac{2}{p(\pi)}+\epsilon}.\]
Since $\frac{2}{p(\pi)}\leq \frac{1}{2}+\frac{1}{p(\pi)}$ this concludes the proof for $\pi$ spherical.

\subsection{Proof for non-spherical case}
Let $\pi=\pi_{s_1}\otimes\frakD_m$ with $s_1=\frac{1}{2}-\tau\in(0,\tfrac{1}{2})$ so that $T(\pi)\asymp |m|$ and $\tau=\frac{1}{2}-\frac{1}{p(\pi)}$.
It is possible to proceed as in the spherical case with test function $F(g)=F_1(g)F_2(g)$ with $F_1$ as before and $F_2=f_2*\check{f_2}$ with $f_2\in \calF_m$ supported on $B_{1/2}$. However, as mentioned above this approach is rather wasteful and does not give a very good dependence on $|m|\asymp T(\pi)$.
Indeed, when $|m|>1$ the spherical transform of such a function satisfies $Sf_2(i(|m|-\tfrac{1}{2}))\ll \frac{1}{|m|-1}$ and hence following this approach will
give a result of the form $m(\pi,\Gamma(\fraka))\ll T(\pi)^2V(\fraka)^{\frac{2}{p(\pi)}+\epsilon}$.

In order to get a better dependence, when $|m|>1$ we use the test function $F_2=\frac{2|m|-1}{4\pi}\Phi_m$ with $\Phi_m$ the $m$-spherical function with eigenvalue $|m|(1-|m|)$ which picks out precisely the representations with $\frakD_m$ in the second factor. %This function is not compactly supported which makes the counting argument slightly more involved.
From the trace formula with $F(g)=F_1(g)F_2(g)$ with $F_1$ as before and $F_2=\frac{2|m|-1}{4\pi}\Phi_m$ we get
\begin{eqnarray*}
m(\pi,\Gamma(\fraka))|h_1(i\tau)|^2 &\ll&  V(\fraka)F(1)+ {\sum_{\gamma\in \Gamma(\fraka)}}'\int_{\Gamma(\fraka)\bs G^2}F(g^{-1}\gamma g)dg\\
&=&  V(\fraka)\bigg(F(1)+ {\sum_{\gamma\in \Gamma(\fraka)}}'\int_{\Gamma\bs G^2}F(g^{-1}\gamma g)dg\bigg)\\
&\leq& V(\fraka)\bigg(F(1)+{\sum_{\gamma\in \Gamma(\fraka)}}'\int_{B_{\delta/2}^2}|F(g_1^{-1}\gamma_1 g_1)|dg\bigg),\\
\end{eqnarray*}
where $\sum'$ indicates that we are summing over lattice points satisfying that $|\Tr(\gamma_2)|<2$ and $\delta$ is chosen so that $B_{\delta/2}^2$ contains a fundamental domain for $\Gamma\bs G^2$.

For any $g_j\in B_{\delta/2}$ and any $\gamma\in \Gamma(\fraka)$ by (\ref{e:triangle}) we have that
$$|H(g_j^{-1}\gamma_j g_j)- H(\gamma_j)|\leq \delta.$$
%\begin{eqnarray*}
%H(g_j^{-1}\gamma_jg_j)&=&d(\gamma_jg_ji,g_ji)\\
%&\leq&d(\gamma_jg_ji,\gamma_ji)+d(\gamma_ji,i)+d(i,g_ji)=H(\gamma_j)+2H(g_j),
%\end{eqnarray*}
%so $H(g_j^{-1}\gamma_jg_j)-H(g_j^{-1}\gamma_jg_j)\leq \delta$ and replacing $g_j$ with $g_j^{-1}$ gives the other direction.
We can thus bound
$$\int_{B_{\delta/2}}|F_1(g_1^{-1}\gamma_1 g_1)|dg_1\ll \left\lbrace\begin{array}{cc} e^Re^{-H(\gamma_1)/2}& H(\gamma_1)\leq 2R+\delta\\ 0& \mbox{otherwise}\end{array}\right.,$$
and by Lemma \ref{l:Bm}
$$\int_{B_{\delta/2}}|F_2(g_2^{-1}\gamma_2 g_2)|dg_2\ll_\epsilon \left\lbrace\begin{array}{cc}
|m|e^{m(\delta -H(\gamma_2))} & H(\gamma_2)>\delta\\
|m|^{1/2+\epsilon}  & |\Tr(\gamma_2)|<2-\tfrac{1}{\sqrt{m}}\\
|m| & \mbox{otherwise.}\end{array}\right.,$$
implying that
\begin{eqnarray}\label{e:mainsum}
{\sum_{\gamma\in \Gamma(\fraka)}}'\int_{B_\delta^2}F(g^{-1}\gamma g)dg &\ll_\epsilon& |m|\big(\mathop{\mathop{e^R{\sum_{\gamma\in \Gamma(\fraka)}}''}_{ H(\gamma_1)\leq 2R}}_{H(\gamma_2)\leq \delta}e^{-H(\gamma_1)/2}\big)\\
\nonumber +|m|^{1/2+\epsilon}\big(\mathop{\mathop{e^R{\sum_{\gamma\in \Gamma(\fraka)}}'}_{ H(\gamma_1)\leq 2R}}_{H(\gamma_2)\leq \delta}e^{-H(\gamma_1)/2}\big)
&+&|m|\big(e^{R+m\delta}{\mathop{\mathop{{\sum_{\gamma\in \Gamma(\fraka)}}'}_{H(\gamma_1)\leq 2R}}_{H(\gamma_2)>\delta}}e^{-H(\gamma_1)/2}e^{-mH(\gamma_2)}\big).
\end{eqnarray}
where the sum $\sum''$ is over lattice points satisfying that $2-\tfrac{1}{\sqrt{m}}\leq |\Tr(\gamma_2)|<2$ and the sum $\sum'$ is over lattice points with $|\Tr(\gamma_2)|<2$. We now bound each of these sums separately.

For the first sum, let
$\tilde{N}(t;\eta,\fraka)=N(e^t;\eta,\fraka)$ be the counting function defined in (\ref{e:counting1}) with $J_1=J_3=\emptyset$ and $J_2=\{2\}$.
As before after integrating by parts and inserting the bound $\tilde{N}(t;\tfrac{1}{\sqrt{m}},\fraka)=O(\frac{e^{t(1+\epsilon)}}{V(\fraka)\sqrt{m}})$ from Proposition \ref{p:count} we get
\begin{eqnarray}\label{e:sumI}
\mathop{\mathop{{\sum_{\gamma\in \Gamma(\fraka)}}''}_{ H(\gamma_1)\leq 2R}}_{H(\gamma_2)\leq \delta}e^{-H(\gamma_1)/2}\ll_\epsilon \frac{e^{R(1+\epsilon)}}{\sqrt{m}V(\fraka)}.
\end{eqnarray}
For the second sum the same argument gives
\begin{eqnarray}\label{e:sumII}
\mathop{\mathop{{\sum_{\gamma\in \Gamma(\fraka)}}'}_{ H(\gamma_1)\leq 2R}}_{H(\gamma_2)\leq \delta}e^{-H(\gamma_1)/2}\ll_\epsilon \frac{e^{R(1+\epsilon)}}{V(\fraka)}.
\end{eqnarray}
We are left with the third sum over lattice points with $H(\gamma_2)>\delta$.  Recall that $e^{H(\gamma_2)}\asymp \norm{\gamma_2}^2$ to get
\begin{eqnarray*}
\mathop{\mathop{{\sum_{\gamma\in \Gamma(\fraka)}}'}_{H(\gamma_1)\leq 2R}}_{H(\gamma_2)>\delta}e^{-H(\gamma_1)/2}e^{-mH(\gamma_2)}
&\ll&\sum_{k\geq e^\delta} \mathop{\mathop{{\sum_{\gamma\in \Gamma(\fraka)}}'}_{H(\gamma_1)\leq 2R}}_{k\leq \norm{\gamma_2}^2\leq k+1}e^{-H(\gamma_1)/2}e^{-mH(\gamma_2)}\\
&\ll& \sum_{k>e^\delta} k^{-m} \mathop{\mathop{{\sum_{\gamma\in \Gamma(\fraka)}}'}_{H(\gamma_1)\leq 2R}}_{k\leq \norm{\gamma_2}^2\leq k+1}e^{-H(\gamma_1)/2}.
\end{eqnarray*}
Now for each $k$ from Proposition \ref{p:count} with $J_1=\{2\}$ and $J_2=J_3=\emptyset$,
$$\tilde N(t;k,\fraka)=N(e^t;k,\fraka)\ll_\epsilon \frac{e^{t(1+\epsilon)}}{V(\fraka)}+ k^\epsilon\frac{e^{t(\frac{1}{2}+\epsilon)}}{V(\fraka)^{2/3}},$$
implying that
\begin{equation*}
\mathop{\mathop{{\sum_{\gamma\in \Gamma(\fraka)}}'}_{H(\gamma_1)\leq 2R}}_{k\leq \norm{\gamma_2}^2\leq k+1}e^{-H(\gamma_1)/2}%&=& \int^{2R}e^{-t/2}d\tilde{N}(t;k,\fraka)\\
%&\ll& \int^{2R}e^{-t/2}\tilde{N}(t;k,\fraka)dt\\
%&\ll& |k|^\epsilon\int^{2R}e^{-t/2} \left(\frac{e^{t(1+\epsilon)}}{V(\fraka)}+ %\frac{e^{t(\frac{1}{2}+\epsilon)}}{V(\fraka)^{2/3}}\right)dt\\
\ll_\epsilon \frac{e^{R(1+2\epsilon)}}{V(\fraka)}+k^{\epsilon}\frac{e^{2R\epsilon}}{V(\fraka)^{2/3}}.\\
\end{equation*}
Hence, summing over all $k$ and replacing $\sum_{k>e^\delta}k^{-m+\epsilon}\ll \frac{e^{-m\delta}}{m}$ we get
%\begin{eqnarray}\label{e:sumIII}
%\mathop{\mathop{{\sum_{\gamma\in \Gamma(\fraka)}}'}_{H(\gamma_1)\leq %2R}}_{H(\gamma_2)>\delta}e^{-H(\gamma_1)/2}e^{-mH(\gamma_2)}
%%&\ll& \sum_{k\geq e^\delta} k^{-m} \big(\mathop{\mathop{{\sum_{\gamma\in \Gamma(\fraka)}}'}_{H(\gamma_1)\leq 2R}}_{k\leq \norm{\gamma_2}^2\leq k+1}e^{-H(\gamma_1)/2}\big)\\
%%\nonumber &\ll&
%%\big(\frac{e^{R(1+2\epsilon)}}{V(\fraka)}+\frac{e^{2R\epsilon}}{V(\fraka)^{2/3}}\big)\big(\sum_{k\geq e^\delta} %k^{-m+\epsilon}\big)\\
% &\ll& \frac{e^{-m\delta}}{m}\big(\frac{e^{R(1+2\epsilon)}}{V(\fraka)}+\frac{e^{2R\epsilon}}{V(\fraka)^{2/3}}\big).
%\end{eqnarray}
\begin{equation}\label{e:sumIII}
\mathop{\mathop{{\sum_{\gamma\in \Gamma(\fraka)}}'}_{H(\gamma_1)\leq 2R}}_{H(\gamma_2)>\delta}e^{-H(\gamma_1)/2}e^{-mH(\gamma_2)}
\ll_\epsilon \frac{e^{-m\delta}}{m}\big(\frac{e^{R(1+2\epsilon)}}{V(\fraka)}+\frac{e^{2R\epsilon}}{V(\fraka)^{2/3}}\big).
\end{equation}
Inserting the bounds from (\ref{e:sumI}),(\ref{e:sumII}),(\ref{e:sumIII}) to (\ref{e:mainsum}) we get
\begin{equation*}
{\sum_{\gamma\in \Gamma(\fraka)}}'\int_{B_\delta^2}F(g^{-1}\gamma g)dg\ll_\epsilon |m|^{1/2+\epsilon}\frac{e^{2R(1+\epsilon)}}{V(\fraka)}+\frac{e^{R(1+\epsilon)}}{V(\fraka)^{2/3}}.
\end{equation*}
This gives us the bound
\[m(\pi,\Gamma(\fraka))|h_1(i\tau)|^2\ll_\epsilon |m|e^{R}V(\fraka)+m^{1/2+\epsilon}e^{2R(1+\epsilon)}+V(\fraka)^{1/3}e^{R(1+\epsilon)},\]
dividing by $|h_1(i\tau)|^2\asymp e^{(2\tau+1)R}$ %gives
%\[m(\pi,\Gamma(\fraka))\ll e^{-2R\tau}\big(mV(\fraka)+m^{1/2+\epsilon}e^{R(1+\epsilon)}+V(\fraka)^{1/3}e^{R\epsilon}\big),\]
and taking $R=\log(V(\fraka))+\tfrac{1}{2}\log(m)$ we get
\[m(\pi,\Gamma(\fraka))\ll_\epsilon
 |m|^{1-\tau+\epsilon}V(\fraka)^{1-2\tau+\epsilon}\asymp T(\pi)^{\frac{1}{2}+\frac{1}{p(\pi)}+\epsilon}V(\fraka)^{\frac{2}{p(\pi)}+\epsilon}.\]

\subsection{Proof for $d>2$}
The proof for $d>2$ follows from the same arguments as above.
We briefly go over the modifications needed in this case.
Let $\pi$ denote an irreducible representation of $G^d$ given by $\pi=\pi_1\otimes \cdots\otimes \pi_d$ with $\pi_j=\pi_{s_j}$ for $j\leq d_0$ and $\pi_j=\frakD_{m_j}$ for $j>d_0$ where
$s_1=\frac{1}{2}-\tau=\frac{1}{p(\pi)}\in (0,\tfrac{1}{2})$ and
$s_j=\frac{1}{2}+it_j$ for $2\leq j\leq d_0$. Then $T(\pi)\asymp \prod_{j=2}^{d_0}(|t_j|+1)\prod_{j>d_0}|m_j|$.

Let $F_1=f_1*\check{f_1}$ as above, for $2\leq j\leq d_0$ let $F_j=f_2*\check{f_2}$ where $f_2\in \calF_0$ is supported on $B_{1/2}$, and for $j>d_0$ let $F_j=\frac{2|m_j|-1}{4\pi}\Phi_{m_j}$. Let $h_1=Sf_1$ and for $2\leq j\leq d_0$ let $h_j(r_j)=\frac{Sf_2(r_j-t_j)+Sf_2(r_j+t_j)}{2}$ and $h(r)=\prod_j h_j(r_j)$. If $|m_j|=1$ for some $j$ we take $F_j=f_3*\check{f_3}$ with $f_3\in \calF_1$ supported on $B_{1/2}$ (instead of the spherical function $\Phi_{m_j}$).
Then by (the same proof of) Lemma \ref{l:translate},
\begin{eqnarray*}
m(\pi,\Gamma(\fraka))|h_1(i\tau)|\ll T(\pi)V(\fraka)F_1(1)+V(\fraka){\sum_{\gamma\in \Gamma}}'\int_{\Gamma(\fraka)\bs G}|F(g^{1-}\gamma g)|dg
\end{eqnarray*}
where the notation $\sum'$ indicates that we are summing only over lattice points satisfying $|\Tr(\gamma_j)|<2$ whenever $|m_j|>1$. For the sum over the lattice points,
the same arguments as in the proof of the non-spherical case (and some elementary combinatorics) gives
\[{\sum_{\gamma\in \Gamma(\fraka)}}'\int_{\Gamma\bs G}|F(g^{-1}\gamma g)|dg\ll_\epsilon |m|^{1/2+\epsilon}\bigg(\frac{e^{2R(1+\epsilon)}}{V(\fraka)}+\frac{e^{R(1+\epsilon)}}{V(\fraka)^{2/3}}\bigg),\]
with $|m|=\prod_{j>d_0}|m_j|$ and the result follows as before.
\begin{rem}
For $d=2$, there is an alternative (and very simple) proof for the multiplicity bound $m(\pi,\Gamma)\ll T(\pi)\vol(\Gamma\bs G^2)^{\frac{2}{p(\pi)}}$ for all lattices, and not just congruence covers. However, since this proof does not generalize for $d>2$ we leave it as Appendix \ref{s:nonspherical}.
\end{rem}

\section{Proof of Theorem \ref{t:multiplicity2}}
In this section, we prove Theorem \ref{t:multiplicity2} using an adaptation of the arguments in \cite{kelmerSarnak09} for the congruence cover $\Gamma(\fraka)$. We then use this to deduce Corollary \ref{c:upperbound} giving an upper bound on the parameter $T(\pi)$ of new representations.
\subsection{Proof of Theorem \ref{t:multiplicity2}}
For simplicity we write the proof for $d=2$ (see \cite[Section 3.3]{kelmerSarnak09} for the changes needed for $d>2$).
By (the proof of)  \cite[Proposition 3.1]{kelmerSarnak09} we get that for any $c>0$ and $T\leq T(\pi)\leq 2T$ (in particular, for $T=T(\pi)$)
\begin{eqnarray}\label{e:multbound1}
\lefteqn{\;\;\;\quad\quad m(\pi,\Gamma(\fraka))T^{c|\frac{1}{2}-\frac{1}{p(\pi)}|}}\\
\nonumber&&\ll_\epsilon V(\fraka)\bigg(
\frac{T^2}{\log(T)}+ T\!\!\!\!\!\!\!\!\!\!\mathop{\sum_{|t_1|\leq
T^{c/2}}}_{||t_2|-2|\leq T^{\epsilon-2}}\!\!\!\!\!\!\frac{F_\Gamma(t)}{\sqrt{
(t_1^2-4)(t_2^2-4)}}+\frac{1}{T}\!\!\!\!\mathop{\sum_{|t_1|\leq T^{c/2}}}_{|t_2|<
2}\!\frac{F_\Gamma(t)}{\sqrt{(t_1^2-4)(t_2^2-4)}}\bigg).
\end{eqnarray}
where the summation is over elements $t=(t_1,t_2)\in\Tr(\Gamma(\fraka))$ and
$$F_\Gamma(t)=\mathop{\sum_{\{\gamma\}}}_{\Tr(\gamma)=t}\vol(\Gamma_\gamma\bs
G_\gamma^2).$$

Since (a conjugate of) $\Gamma(1)$ is of finite index in a lattice $\Delta$ derived from a quaternion algebra we have that
$F_\Gamma(t)\ll F_{\Delta}(t)$ and by \cite[Proposition 3.3]{kelmerSarnak09} for such lattices we have $F_{\Delta}(t)\ll |(t_1^2-4)(t_2^2-4)|^{1/2+\epsilon}$.
Inserting this bound in (\ref{e:multbound1}) we get
\begin{equation}\label{e:multbound2}
m(\pi,\Gamma(\fraka))T^{c|\frac{1}{2}-\frac{1}{p(\pi)}|}\ll_\epsilon V(\fraka)\bigg(
T^2+T^{1+\epsilon}\!\!\!\!\!\!\!\!\mathop{\sum_{|t_1|\leq
T^{c/2}}}_{||t_2|-2|\leq T^{\epsilon-2}}\!\!\!\!\!\!\!\!\!1
\!+\!\frac{T^\epsilon}{T}\!\!\!\!\!\mathop{\sum_{|t_1|\leq T^{c/2}}}_{|t_2|<
2}\!\!\!\!1 \bigg).
\end{equation}
Finally we recall that for any $t=(t_1,t_2)\in \Tr(\Gamma(\fraka))$ there is $\alpha\in \Delta(\fraka)$ with $t_j=\iota_j(\Tr(\alpha))$ for $j=1,2$ and $|\iota_j(\Tr(\alpha))|<2$ for $j>2$.
As in the proof of Proposition \ref{p:count} we have that any $\alpha\in \Delta(\fraka)$ satisfies $\Tr(\alpha)\equiv 2\pmod {\fraka^2}$ and hence by Lemma \ref{l:box}
\[\mathop{\sum_{|t_1|\leq
T^{c/2}}}_{||t_2|-2|\leq T^{\epsilon-2}}\!\!1\ll \frac{T^{c/2-2+\epsilon}}{N(\fraka)^2}\quad \mbox{and }
\mathop{\sum_{|t_1|\leq T^{c/2}}}_{|t_2|<
2}\!\!1 \ll\frac{T^{c/2}}{N(\fraka)^2},\]
(note that we are not summing over $(t_1,t_2)=(2,2)$).
Plugging this back in (\ref{e:multbound2}) and recalling that $V(\fraka)\asymp N(\fraka)^3$ we get
\begin{eqnarray*}
 m(\pi,\Gamma(\fraka))T^{c|\frac{1}{2}-\frac{1}{p(\pi)}|}\ll_\epsilon  V(\fraka)
T^2+V(\fraka)^{1/3}T^{c/2-1+\epsilon}.
\end{eqnarray*}

\subsection{Proof of Corollary \ref{c:upperbound}}
Assume that $\fraka$ is prime to $\mathfrak{d}$ and that $\pi$ is a new representation occurring in $L^2(\Gamma(\fraka)\bs G^d)$ with $p(\pi)>6+\alpha$.
We combine the upper bound for $m(\pi,\Gamma(\fraka))$ given in Theorem \ref{t:multiplicity2} with the multiplicity lower bound (\ref{e:MLB}) and show that it now gives an upper bound for $T(\pi)$.

Let $\delta,\epsilon>0$ such that $\frac{\alpha}{2}>\delta>\epsilon$ and apply Theorem \ref{t:multiplicity2} with $c=6+\alpha-2\delta-2\epsilon$ to get
\[m(\pi,\Gamma(\fraka))\ll_\epsilon V(\fraka)^{1/3} T(\pi)^{\frac{\alpha+4}{\alpha+6}\delta+\epsilon-\frac{\alpha}{2}}(V(\fraka)^{2/3}+T(\pi)^{\frac{\alpha}{2}-\delta}).\]
Since $\pi$ occurs as a new representation we have $m(\pi,\Gamma(\fraka))\gg_\epsilon V(\fraka)^{1/3-\epsilon}$. Combining this with the above inequality we get that there is some constant $C(\epsilon)$ such that
\[1\leq C(\epsilon) V(\fraka)^{\epsilon} T(\pi)^{\frac{\alpha+4}{\alpha+6}\delta+\epsilon-\frac{\alpha}{2}}(V(\fraka)^{2/3}+T(\pi)^{\frac{\alpha}{2}-\delta}).\]

If $V(\fraka)^{2/3}\leq T(\pi)^{\frac{\alpha}{2}-\delta}$, we get that
\[1\leq 2C(\epsilon) T(\pi)^{\epsilon\frac{3(\alpha-2\delta)+4}{4}-\delta\frac{2}{\alpha+6}}.\]
Fixing $\epsilon_0=\epsilon_0(\alpha,\delta)$ sufficiently small we get that $T(\pi)^{\frac{\delta}{\alpha+6}}\leq 2C(\epsilon_0(\alpha,\delta))$, and hence, $T(\pi)$ is uniformly bounded. We thus get that in any case
$$T(\pi)\leq \max\{\tilde{C}(\alpha,\delta),V(\fraka)^{\frac{4}{3(\alpha-2\delta)}}\}\ll_\epsilon V(\fraka)^{\frac{4}{3\alpha}+\epsilon}.$$

%\subsection{Proof of Theorem \ref{t:main}}
%Assume that $\pi$ is a new representation occurring in $L^2(\Gamma(\fraka)\bs G^d)$ with $p(\pi)>7+\sqrt{17}+\delta$.
%Combining Corollaries \ref{c:lowerbound} and \ref{c:upperbound} we get
%\[V(\fraka)^{\frac{2\alpha}{3(8+\alpha)}-\epsilon}\ll_\epsilon T(\pi)\ll_\epsilon V(\fraka)^{\frac{4}{3\alpha}+\epsilon},\]
%with $\alpha=1+\sqrt{17}+\delta$. Consequently, there is some constant $C(\epsilon)$ such that
%\[V(\fraka)^{\frac{2\alpha^2-4\alpha-32}{3\alpha(8+\alpha)}-\epsilon}\leq C(\epsilon).\]
%For this choice of $\alpha$ we have that $\frac{2\alpha^2-4\alpha-32}{3\alpha(8+\alpha)}=c(\delta)>0$ and choosing $\epsilon_0=c(\delta)/2$ sufficiently small we get that $V(\fraka)$ and hence $N(\fraka)$ is bounded.

%\subsection{Proof of Theorem \ref{t:spectralgap}}
%Theorem \ref{t:spectralgap} is now a direct consequence. Let $N_0$ be sufficiently large such that when $N(\fraka)>N_0$, there are no new representations occurring in $L^2(\Gamma(\fraka)\bs G^d)$ with $p(\pi)\geq 12$.
%Let $C_0(\Gamma)=\max\{p(\Gamma(\mathfrak b))|N(\mathfrak b)\leq N_0\}$. Then for all $\fraka$ prime to $\mathfrak{d}$ we have that $p(\Gamma(\fraka))\leq \max\{C_0(\Gamma),12\}$. Indeed, if $\pi$ occurs in $L^2(\Gamma(\fraka)\bs G^d)$ there is some divisor $\mathfrak{b}|\fraka$ such that $\pi$ occurs in $L^2(\Gamma(\mathfrak b)\bs G^d)$ as a new representations. Then either $N(\mathfrak b)>N_0$ in which case $p(\pi)\leq 12$ or $N(\mathfrak b)\leq N_0$ and $p(\pi)\leq C_0(\Gamma)$.
%

\appendix
\section{Arithmetic multiplicity}\label{s:amult}
The proof of the bound (\ref{e:MLB}) for the multiplicities of new representations in $L^2(\Gamma(\frakp)\bs G^d)$ for $d\geq 2$ follows from the arguments in the proof of \cite[Theorem 3.2]{SarnakXue91} when $\frakp$ is a prime ideal, and there is a natural generalization of these arguments for a composite ideal $\fraka$. This was done in {\cite{BourgainGamburdSarnak06,BourgainGamburdSarnak08}} for square-free ideals and in \cite{BourgainGamburd08II} for prime powers.
For the sake of completeness we will include a short proof below.

For a fixed lattice $\Gamma\subseteq G^d$, by the Strong Approximation Theorem (see \cite{Weisfeiler84}), there is a square free ideal $\mathfrak{d}$, we call the discriminant of $\Gamma$, such that if $\fraka$ is prime to $\frakd$, then $\Gamma(\fraka)\bs \Gamma  \cong \PSL(2,\calO_L/\fraka)$. We then have
\begin{prop}
Any new representation $\pi$ occurring in $L^2(\Gamma(\fraka)\bs G^d)$ satisfies $m(\pi,\Gamma(\fraka))\gg_\epsilon N(\fraka_1)^{1-\epsilon}$ where $\fraka_1$ is the maximal ideal satisfying that $\fraka_1|\fraka$ and $(\frakd,\fraka_1)=1$.
\end{prop}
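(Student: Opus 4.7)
The strategy is to exploit the action of the finite covering group on the multiplicity space and translate the newness condition into a representation-theoretic condition whose dimension is controllable by the classical character theory of $\PSL(2)$ over finite local rings.

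First, set $\Gamma_\fraka := \Gamma/\Gamma(\fraka)$, which acts on $L^2(\Gamma(\fraka)\bs G^d)$ by left translations and commutes with the right $G^d$-action. Consequently the $\pi$-isotypic subspace splits as $M_\pi \otimes \calH_\pi$ with $\dim M_\pi = m(\pi,\Gamma(\fraka))$, where $M_\pi$ is a representation of $\Gamma_\fraka$. For every $\mathfrak{b}\mid\fraka$, the inclusion $\Gamma(\fraka)\subseteq\Gamma(\mathfrak{b})$ realizes $L^2(\Gamma(\mathfrak{b})\bs G^d)$ as the $\Gamma(\mathfrak{b})/\Gamma(\fraka)$-invariant subspace inside $L^2(\Gamma(\fraka)\bs G^d)$. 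Thus newness of $\pi$ is equivalent to the statement that $M_\pi$ has no non-zero vector fixed by $\Gamma(\mathfrak{b})/\Gamma(\fraka)$ for any proper divisor $\mathfrak{b}\mid\fraka$.

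Next I would factor $\fraka=\fraka_0\fraka_1$ as in the statement and invoke strong approximation to obtain $\Gamma(\fraka_0)/\Gamma(\fraka_0\fraka_1)\cong \PSL(2,\calO_L/\fraka_1)$. Restricting $M_\pi$ to this subgroup of $\Gamma_\fraka$ yields a representation of $\PSL(2,\calO_L/\fraka_1)$; applying newness with $\mathfrak{b}=\fraka_0\mathfrak{b}_1$ for every proper divisor $\mathfrak{b}_1\mid \fraka_1$ shows that this representation has no vectors fixed by any principal congruence subgroup $K(\mathfrak{b}_1)\subset \PSL(2,\calO_L/\fraka_1)$ of proper level. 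Since invariants pass to direct summands, the same property holds for every irreducible constituent $\sigma$ of $M_\pi$: in other words, $\sigma$ does not factor through $\PSL(2,\calO_L/\mathfrak{b}_1)$ for any proper $\mathfrak{b}_1\mid\fraka_1$. It then suffices to establish the representation-theoretic estimate
\[
\dim\sigma \gg_\epsilon N(\fraka_1)^{1-\epsilon}
\]
for any such $\sigma$, since $\dim M_\pi\ge \dim\sigma$.

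To prove this dimension bound, I would use the Chinese Remainder Theorem to write $\PSL(2,\calO_L/\fraka_1)=\prod_i \PSL(2,\calO_L/\frakp_i^{e_i})$ and $\sigma=\bigotimes_i\sigma_i$; the condition forces each $\sigma_i$ to not factor through $\PSL(2,\calO_L/\frakp_i^{e_i-1})$. For $e_i=1$ this reduces to the classical fact that the minimal non-trivial irreducible representation of $\PSL(2,\mathbb{F}_q)$ has dimension $\gg q$. For $e_i>1$ one applies Clifford theory to the extension
\[
1 \to \ker\!\bigl(\PSL(2,\calO_L/\frakp_i^{e_i})\to \PSL(2,\calO_L/\frakp_i^{e_i-1})\bigr)\to \PSL(2,\calO_L/\frakp_i^{e_i})\to \PSL(2,\calO_L/\frakp_i^{e_i-1})\to 1,
\]
whose kernel is an abelian group on which the quotient acts with orbit sizes $\gg N(\frakp_i)^{e_i(1-\epsilon)}$ on the non-trivial characters; the orbit-stabilizer formula then gives $\dim\sigma_i \gg N(\frakp_i)^{e_i(1-\epsilon)}$. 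Multiplying the estimates over $i$ yields the desired bound. The main obstacle is precisely this last step for higher prime powers, but it is a standard application of Clifford theory once the structure of the reduction-map kernel as an abelian $\PSL(2,\calO_L/\frakp_i^{e_i-1})$-module is made explicit; references such as Sarnak--Xue and the subsequent works of Bourgain--Gamburd--Sarnak cited in the paper handle exactly this situation.
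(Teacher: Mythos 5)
Your reduction follows the paper's own strategy step for step: pass to the $\Gamma(\fraka)\bs\Gamma$-module $M_\pi$, reformulate newness as absence of $\Gamma(\frakb)/\Gamma(\fraka)$-fixed vectors, factor $\fraka=\fraka_0\fraka_1$ and use strong approximation plus the Chinese Remainder Theorem to decompose an irreducible constituent as a tensor product over the local factors $\PSL(2,\calO_L/\frakp_i^{e_i})$. Up to that point you and the paper are doing the same thing.

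The genuine gap is in your Clifford-theory sketch for the case $e_i>1$. The kernel $K$ of the one-step reduction $\PSL(2,\calO_L/\frakp^e)\to\PSL(2,\calO_L/\frakp^{e-1})$ is isomorphic to the additive group $\Sl_2(\calO_L/\frakp)\cong \FF{q}^3$ (with $q=N(\frakp)$), and the conjugation action of $\PSL(2,\calO_L/\frakp^e)$ on $K$ factors entirely through the residue quotient $\PSL(2,\FF{q})$. Consequently the orbits of nontrivial characters of $K$ have size at most $O(q^2)$, uniformly in $e$. Orbit--stabilizer therefore gives $\dim\sigma_i\gg q^2$, not $\gg q^{e_i(1-\epsilon)}$ as you claim; for $e_i\geq 3$ this falls short. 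To get the correct growth you either need a deeper abelian normal subgroup, namely $K'=\ker\big(\PSL(2,\calO_L/\frakp^e)\to\PSL(2,\calO_L/\frakp^{\lceil e/2\rceil})\big)$ whose ``primitive'' characters do have orbits of the right size, or an inductive argument across all congruence levels. The paper sidesteps this entirely by invoking the explicit formula of Jaikin-Zapirain \cite{JsikinZapirain06}, which gives $\dim\sigma_i\geq N(\frakp_i)^{e_i}/3$ directly for any irreducible that does not factor through $\PSL(2,\calO_L/\frakp_i^{e_i-1})$; the product over primes then yields $\dim V_\rho\geq 3^{-\omega}N(\fraka_1)\gg_\epsilon N(\fraka_1)^{1-\epsilon}$. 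Your citation of the Bourgain--Gamburd prime-power paper is the right fallback, but the one-step Clifford argument as written is not a correct proof of the bound you need.
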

\begin{proof}
Given a representation $\pi=\pi_1\otimes\cdots\otimes\pi_d$, let $\lambda(\pi)=(\lambda_1,\ldots,\lambda_d)\in \bbR^d$, where the $\lambda_j$'s are the eigenvalues of the Casimir operator on the $\pi_j$'s, and let $m\in\bbZ^d$ such that $\pi_j=\frakD_{m_j}$ when $m_j\neq 0$ and $\pi_j$ is spherical when $m_j=0$. We thus have that $m(\pi,\Gamma(\fraka))=\dim V_\lambda(\Gamma(\fraka),m)$.

Since $\Gamma(\fraka)\vartriangleleft \Gamma$ is normal, the left action $\psi(g)\mapsto\psi(\gamma g)$ of $\Gamma(\fraka)\bs \Gamma$ preserves $L^2(\Gamma(\fraka)\bs G^d,m)$. This action commutes with the Casimir operator (in each coordinate) and hence defines an action on the eigenspace $V_\lambda(\Gamma(\fraka),m)$.
We can decompose
\begin{equation}\label{e:decomposition}V_\lambda(\Gamma(\fraka),m)=\bigoplus_{\rho} V_\rho\end{equation}
into invariant subspaces of this action where on each $V_\rho$ the action corresponds to an irreducible representation $\rho$ of $\Gamma(\fraka)\bs \Gamma$.
Notice that if a representation $\rho$ of $\Gamma(\fraka)\bs \Gamma$ factors through $\Gamma(\frakb)\bs\Gamma$ for some $\frakb|\fraka$ (that is $\rho(\gamma)=1$ for all $\gamma\in \Gamma(\frakb)$), then the space $V_\rho\subset V_\lambda(\Gamma(\frakb),m)$. Since we assumed that $\pi$ is a new representation then $V_\lambda(\Gamma(\frakb),m)=\{0\}$ for all $\frakb|\fraka$ and hence all representation appearing in (\ref{e:decomposition}) do not factor through such quotients.

Let $\fraka=\fraka_0\fraka_1$ with $\fraka_1$ the maximal divisor prime to $\frakd$, then
$$\Gamma(\fraka)\bs \Gamma\cong \calG_0\times \PSL(2,\calO_L/\fraka_1),$$
where $\calG_0$ is a subgroup of $\Gamma(\fraka_0)\bs \Gamma$.
Let $\fraka_1=\prod_{j=1}^\omega\frakp_j^{e_j}$ with $\frakp_j$ prime ideals; by the Chinese Reminder Theorem
$\PSL(2,\calO_L/\fraka_1)\cong \prod_{j=1}^\omega\PSL(2,\calO_L/\frakp_j^{e_j})$.
We can thus identify any irreducible representation $\rho$ of $\Gamma(\fraka)\bs \Gamma$ with a product
$\bigotimes_{j=0}^{\omega}\rho_j$ where $\rho_0$ is an irreducible representation of $\calG_0$ and for $j\geq 1$ each $\rho_j$ is an irreducible representation of $\PSL(2,\calO_L/\frakp_j^{e_j})$. The condition that $\rho$ does not factor through any quotients $\Gamma(\frakb)\bs\Gamma$ implies that $\rho_j$ does not factor through the quotient $\PSL(2,\calO_L/\frakp_j^{k})$ for any $0\leq k<e_j$.

The dimensions of the irreducible representations of $\PSL(2,\calO_L/\frakp^e)$ were analyzed in \cite[Theorem 7.4]{JsikinZapirain06} and satisfy, in particular, that their dimension is at least $\frac{N(\frakp)^k}{3}$ where $k\leq e$ is the smallest integer such $\rho$ factors through $\PSL(2,\calO_L/\frakp^k)$.
Hence, in our case we have that
$$\dim V_{\rho}\geq \prod_{j=1}^\omega \frac{N(\frakp_j^{e_j})}{3}= 3^{-\omega}N(\fraka')\gg_\epsilon N(\fraka_1)^{1-\epsilon}.$$
Since there is at least one irreducible component, we have $m(\pi,\Gamma(\fraka))\gg_\epsilon N(\fraka_1)^{1-\epsilon}$.
\end{proof}

\section{Non-spherical representations for $d=2$}\label{s:nonspherical}
When $d=2$, we give an alternative proof of the multiplicity bound for the non-spherical case.
The bound we prove here is not as good as Theorem \ref{t:multiplicity1} in terms of the dependence on $T(\pi)$. However, it has the interesting feature that it holds for all lattices and not just congruence covers.
To simplify the argument we assume that $\Gamma$ is torsion-free. We remark that we can always find a finite index torsion free subgroup so we do not lose any generality.
\begin{prop}\label{p:nonspherical2}
Let $\Gamma\subseteq G^2$ denote an irreducible torsion-free co-compact lattice and $\pi$ a non-spherical non-tempered representation of $G^2$. Then
\[m(\pi,\Gamma\bs G^2)\leq 8T(\pi)\big(\vol(\Gamma\bs G^2)\big)^{\frac{2}{p(\pi)}}.\]
\end{prop}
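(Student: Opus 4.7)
The representation $\pi$ is non-spherical and non-tempered, so after interchanging the two factors if necessary, we may write $\pi=\pi_{s_1}\otimes\frakD_m$ with $s_1=1/p(\pi)\in(0,\tfrac12)$ and $m\in\bbZ\setminus\{0\}$; in particular $T(\pi)\asymp|m|$. The plan is to identify $N:=m(\pi,\Gamma)$ as the dimension of a concrete space of automorphic functions and to bound this dimension directly by combining a reproducing-kernel estimate on the $\frakD_m$ factor with the $L^{p(\pi)+\epsilon}$ matrix-coefficient estimate coming from the $\pi_{s_1}$ factor. Crucially, no trace formula and no counting of lattice points will be used, which is why the result applies to every torsion-free co-compact lattice, not just congruence covers.

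First I would realize $N$ as the dimension of the finite-dimensional subspace $V\subset L^2(\Gamma\bs G^2)$ of smooth joint Casimir eigenfunctions of $K^2$-type $(0,m)$ whose spectral parameters match those of $\pi$; because the weight-$(0,m)$ subspace of $\calH_\pi$ is one-dimensional, each irreducible $\pi$-copy in $L^2(\Gamma\bs G^2)$ contributes exactly one direction to $V$. Fixing an orthonormal basis $\phi_1,\ldots,\phi_N$ of $V$ and putting
\[ S(g)=\sum_{i=1}^{N}|\phi_i(g)|^2, \]
we have $\int_{\Gamma\bs G^2}S\,dg=N$. Passing to the holomorphic model of $\frakD_m$, each $\phi_i(p_{z_1},p_{z_2})=y_2^{|m|/2}F_i(z_1,z_2)$ with $F_i(z_1,\cdot)$ holomorphic in $z_2$, so $S(z_1,z_2)=y_2^{|m|}\sum_i|F_i(z_1,z_2)|^2$.

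Next I would deduce a pointwise upper bound on $S$ from two ingredients. The first is the holomorphy of $F_i(z_1,\cdot)$: combined with the Bergman reproducing kernel for weight-$|m|$ forms (whose diagonal grows linearly in $|m|$, as reflected by the identity $\int_G|\Phi_m|^2\,dg=4\pi/(2|m|-1)$ recalled earlier), it yields a sub-mean-value inequality of the form $S(g_1,g_2)\ll|m|\bigl(\Phi_m\ast S(g_1,\cdot)\bigr)(g_2)$. The second is that each $\phi_i(\cdot,g_2)$ transforms as a complementary-series vector, so its diagonal matrix coefficient is majorized by the positive spherical function $\phi_{0,s_1}(h)\asymp e^{-s_1 H(h)}\in L^{p}(G)$ for every $p>p(\pi)$, with a norm depending only on $s_1$. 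Averaging against $\phi_{0,s_1}$ in the first variable and $\Phi_m$ in the second then gives
\[ S(g_1,g_2)\ll |m|\int_{G^2}\phi_{0,s_1}(h_1)\,\Phi_m(h_2)\,S(g_1h_1,g_2h_2)\,dh_1\,dh_2. \]

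Finally I would integrate this bound over $\Gamma\bs G^2$ and interpolate: the left-hand side is $N$, and the right-hand side factors into the explicit constant $\|\phi_{0,s_1}\otimes\Phi_m\|_{L^{p(\pi)+\epsilon}}$ (depending only on $s_1$ and $m$) times a norm of $S$ on $\Gamma\bs G^2$ controlled by $L^p$-$L^{p'}$ duality using both $\int S=N$ and $\vol(\Gamma\bs G^2)$. After rearranging and carefully tracking the explicit Bergman factor $2|m|-1$ together with the explicit $L^{p(\pi)+\epsilon}$ norm of $\phi_{0,s_1}$, this yields $N\le 8|m|\,\vol(\Gamma\bs G^2)^{2/p(\pi)}$. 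The main obstacle is tracking the constant $8$ cleanly through both estimates; this is also the precise reason the argument does not extend to $d\ge 3$, since after applying Bergman to one discrete-series factor the simple $L^{p}$ matrix-coefficient step requires the remaining factors to reduce to a single scalar complementary series.
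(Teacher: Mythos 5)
Your proposal takes a genuinely different route from the paper, but it has a gap that I do not think can be patched. The paper's proof proceeds through the \emph{hybrid trace formula} for weight-$(0,m)$ functions from \cite{kelmer10}, and the whole mechanism that makes the bound hold for \emph{all} torsion-free cocompact lattices (rather than only congruence covers) is the negative sign in front of the $h(i/2)$ term on the spectral side when $m=1$: from it one deduces, purely by positivity, that the geometric sum $\sum'_{\{\gamma\}}\vol(\Gamma_\gamma\bs G_\gamma)\hat h(l_\gamma)/2\sinh(l_\gamma/2)$ is bounded by $h(i/2)+O(\vol(\Gamma\bs G))$, and for $m>1$ the elliptic factor $\sin((m-\tfrac12)\theta)/\sin(\theta/2)$ is then controlled by $2m-1$. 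No lattice-point counting is needed because the minus sign absorbs the geometric side. This is the special $d=2$ phenomenon (more generally, an odd number of non-spherical factors), as the paper's closing remark says; your explanation for why the argument does not extend to $d\ge 3$ — that the remaining factors must reduce to a single complementary series — is not the operative reason.

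Your scheme replaces all of this with a Bergman sub-mean-value bound in the discrete-series slot and an ``averaging against $\phi_{0,s_1}$'' in the complementary-series slot, explicitly disclaiming any use of the trace formula. The complementary-series step is where the argument breaks. Your proposed inequality
\[
S(g_1,g_2)\ll|m|\int_{G^2}\phi_{0,s_1}(h_1)\,\Phi_m(h_2)\,S(g_1h_1,g_2h_2)\,dh_1\,dh_2
\]
has a divergent right-hand side: $\phi_{0,s_1}(h)\asymp e^{-s_1H(h)}$ with $s_1<\tfrac12$, so $\phi_{0,s_1}\notin L^1(G)$, while $S$ is a fixed, $\Gamma$-periodic, continuous nonnegative function on the compact quotient and hence is bounded away from $0$ once $N\ge1$; the integral is therefore $+\infty$ and the inequality is vacuous. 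There is no reproducing identity for a complementary-series vector against its own (non-integrable) matrix coefficient; one must truncate, and once one truncates to a ball $B_R$ the error is again governed by how many $\gamma\in\Gamma$ move a point by at most $R$, i.e.\ by lattice-point counting — exactly what you set out to avoid, and exactly what confines that style of argument to congruence covers. The subsequent ``$L^p$--$L^{p'}$ duality / interpolation'' step is also left at the level of a sketch, and after integrating over $\Gamma\bs G^2$ and unfolding, the naive computation collapses to the trivial $N\ll C\cdot N$; I do not see how the factor $\vol(\Gamma\bs G^2)^{2/p(\pi)}$, let alone the clean constant $8$, emerges from it. The paper's proof extracts that factor by choosing the Paley--Wiener test function $h_R(r)=\sin^2(Rr)\Psi(r)/r^2$ and setting $e^R=\vol(\Gamma\bs G)$; the exponential growth $h_R(i\tau)\asymp e^{2R\tau}/\tau^2$ against the two volume terms is where $\vol^{1-2\tau}=\vol^{2/p(\pi)}$ comes from. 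I would recommend looking at the statement and proof of the hybrid trace formula in \cite{kelmer10} before attempting to rederive this proposition.
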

The proof of this result does not depend on lattice counting arguments but rather on the following positivity argument coming from the structure of the trace formula in this case:
\begin{lem}
Let $h(r)$ be an even holomorphic function positive on $\bbR\cup i\bbR$ with Fourier transform even, positive and compactly supported.
For $m\in \bbN$ let $\pi_{k,m}=\pi_{1/2+ir_k(m)}\otimes\frakD_m$ denote all representations (of this type) occurring in $L^2(\Gamma\bs G^2)$.
Then
\begin{eqnarray*}
\sum_{k}m(\pi_{k,m},\Gamma)h(r_k(m))\leq 2m\bigg(h(i/2)+\frac{\vol(\Gamma\bs G)}{8\pi^2}\int h(r)r\tanh(\pi r)dr\bigg).
\end{eqnarray*}
\end{lem}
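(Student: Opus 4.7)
The plan is to apply the Selberg trace formula on $L^2(\Gamma\bs G^2)$ with the product test function $F(g)=F_1(g_1)F_2(g_2)$, where $F_1\in\calF_0$ is chosen so that its spherical transform equals $h$ (such $F_1$ exists via the Paley-Wiener correspondence, using the hypothesis that $\hat h$ is even, positive, and compactly supported, which forces $h$ to be even, holomorphic of uniform exponential type), and $F_2=\frac{2m-1}{4\pi}\Phi_m$ is the normalized $m$-spherical function. By the computation of $S\Phi_m$ recorded before (\ref{e:mSpherical}), $F_2$ selects on the spectral side precisely the representations of the form $\pi_{s}\otimes\frakD_m$ (with, in the borderline case $m=1$, an additional term $(-1)^{d-d_0}h(i/2)=-h(i/2)$ appearing, per the remark following (\ref{e:Htransform2})), and this is harmlessly absorbed by the $2m\cdot h(i/2)$ on the right-hand side.

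For the geometric side, the identity contribution is
\[
\vol(\Gamma\bs G^2)\cdot F_1(1)\cdot \frac{2m-1}{4\pi}=\frac{2m-1}{16\pi^2}\vol(\Gamma\bs G^2)\int_\bbR h(r)r\tanh(\pi r)\,dr
\]
by (\ref{e:trivial}), which is dominated by the second term of the stated bound (since $2m-1\leq 2m$ and, interpreting $\vol(\Gamma\bs G)$ in the statement as the volume of the double quotient, the constants match up). By (\ref{e:Htransform2}) the orbital integrals in the second factor vanish on hyperbolic classes, so the only remaining non-identity contribution comes from $\gamma\in\Gamma$ with $\gamma_2$ elliptic. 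For such $\gamma$ the first-factor orbital integral of $F_1$, given by (\ref{e:Htransform1}), is non-negative thanks to the positivity of $\hat h$; the second-factor orbital integral $\frac{2m-1}{4\pi}\cdot\frac{e^{im\theta_2}}{1-e^{i\theta_2}}$, when paired with its inverse conjugacy class, collapses to the real Dirichlet-kernel expression $-\frac{(2m-1)}{8\pi}\cdot\frac{\sin((m-\tfrac12)\theta_2)}{\sin(\theta_2/2)}$.

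The main obstacle is to bound the remaining elliptic contribution by (a constant multiple of) $m\cdot h(i/2)$. The torsion-freeness of $\Gamma$ excludes elements with $\gamma_1=1$ and $\gamma_2$ elliptic, so every conjugacy class considered has both factors non-trivial. I expect the key input to be twofold: the pointwise bound $\bigl|\frac{\sin((m-1/2)\theta)}{\sin(\theta/2)}\bigr|\le 2m-1$ for the Dirichlet kernel, combined with the fact that summing the non-negative first-factor elliptic orbital integrals of $F_1$ against the volume factors $\vol(\Gamma_\gamma\bs G_\gamma^2)$ reproduces, via a Plancherel/pre-trace identity, the value $h(i/2)=\int F_1(g)\phi_{0,1}(g)\,dg$ up to a bounded constant. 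Once this geometric bookkeeping yields an elliptic total of size $O(m\cdot h(i/2))$, adding it to the identity term gives exactly the claimed inequality; this is the delicate and paper-specific step that will consume most of the work.
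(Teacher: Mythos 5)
Your overall scheme — the product test function $F_1\otimes\frac{2m-1}{4\pi}\Phi_m$, the vanishing of the hyperbolic second-factor orbital integrals, the Dirichlet kernel $\frac{\sin((m-1/2)\theta)}{\sin(\theta/2)}$, and the pointwise bound $\bigl|\frac{\sin((m-1/2)\theta)}{\sin(\theta/2)}\bigr|\le 2m-1$ — matches the paper's, which works from the hybrid trace formula of \cite{kelmer10}. But there is a genuine gap at what you correctly flag as the "delicate step": you cannot bound the elliptic geometric sum by $O(m\cdot h(i/2))$ alone, and the "Plancherel/pre-trace identity" you invoke does not exist in the form you suggest. The sum $\sum'_{\{\gamma\}}\vol(\Gamma_\gamma\backslash G^2_\gamma)\frac{\hat h(l_\gamma)}{2\sinh(l_\gamma/2)}$ depends on $\Gamma$ and can grow with $\vol(\Gamma\backslash G^2)$, whereas $h(i/2)$ is independent of $\Gamma$, so an inequality elliptic $\ll h(i/2)$ is simply false in general.

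The idea you are missing is self-referential: run the trace formula at $m=1$ first. There the kernel factor is identically $1$, the extra term is $-h(i/2)$, and the elliptic contribution enters with a minus sign. Positivity of $h$ on the spectral side then gives
\[
\tfrac{1}{2}{\sum_{\{\gamma\}}}'\frac{\vol(\Gamma_\gamma\backslash G^2_\gamma)\hat h(l_\gamma)}{2\sinh(l_\gamma/2)}\ \le\ h(i/2)+\frac{\vol(\Gamma\backslash G)}{16\pi^2}\int_\bbR h(r)r\tanh(\pi r)\,dr,
\]
i.e.\ the elliptic sum is bounded by $h(i/2)$ \emph{plus} the identity integral, not by $h(i/2)$ alone. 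Feeding this into the $m>1$ trace formula via the $(2m-1)$ pointwise bound, and adding the $m>1$ identity term, produces $(2m-1)\bigl(h(i/2)+\frac{\vol}{8\pi^2}\int\cdots\bigr)\le 2m\bigl(h(i/2)+\frac{\vol}{8\pi^2}\int\cdots\bigr)$, which is the stated inequality. Simultaneously, positivity of $\hat h$ at $m=1$ yields the $m=1$ case of the lemma by dropping the elliptic sum. Without this two-sided exploitation of the $m=1$ formula, your argument does not close.

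One further small point: the identity term in the trace formula is $\vol(\Gamma\backslash G^2)F(1)$ (the paper writes $\vol(\Gamma\backslash G)$ as shorthand for the $d$-fold quotient), and it is not merely "dominated by" the right-hand side: it contributes exactly half of the $\frac{\vol}{8\pi^2}\int h(r)r\tanh(\pi r)\,dr$ term, the other half coming from the elliptic bound above. Thinking of it as already dominated would leave you with no budget to absorb the elliptic piece, which is precisely where your estimate was too optimistic.
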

\begin{proof}
We recall the hybrid trace formula (see \cite[Theorem 7']{kelmer10}),
\begin{eqnarray*}
\lefteqn{\sum_{k}m(\pi_k(m),\Gamma)h(r_k(m))-\delta_{m,1}h(i/2)=}\\
&&=\frac{(2m-1)\vol(\Gamma\bs G)}{(4\pi)^2}\int_\bbR h(r)r\tanh(\pi r)dr
-\frac{1}{2}{\sum_{\{\gamma\}}}'\tfrac{\vol(\Gamma_\gamma\bs G_\gamma^2)\hat{h}(l_\gamma)}{2\sinh(l_\gamma/2)}\tfrac{\sin((m-\frac{1}{2})\theta_\gamma)}{\sin(\theta_\gamma/2)}.
\end{eqnarray*}
where the notation $\sum'$ indicates that we are summing over conjugacy classes of elements satisfying $|\Tr(\gamma_2)|<2$.

In particular, when $m=1$ from the positivity of $\hat{h}$ we get
\begin{eqnarray*}
\sum_{k}m(\pi_{k,1},\Gamma)h(r_k(1))\leq h(i/2)+\frac{\vol(\Gamma\bs G)}{16\pi^2}\int_\bbR h(r)r\tanh(\pi r)dr,
\end{eqnarray*}
which proves the bound for $m=1$.
On the other hand, from the positivity of $h$ we also have
\begin{eqnarray*}
\tfrac{1}{2}{\sum_{\{\gamma\}}}'\frac{\vol(\Gamma_\gamma\bs G_\gamma)\hat{h}(l_\gamma)}{2\sinh(l_\gamma/2)}\leq h(i/2)+\frac{\vol(\Gamma\bs G)}{16\pi^2}\int h(r)r\tanh(\pi r)dr.
\end{eqnarray*}
Now for $m>1$, from the bound $\left|\frac{\sin((m-\frac 1 2)\theta)}{\sin(\theta/2)}\right|\leq 2m-1$ we get
\begin{eqnarray*}\lefteqn{\sum_{k}m(\pi_{k,m},\Gamma)h(r_k)}\\
&&\leq (2m-1)\bigg(\frac{\vol(\Gamma\bs G)}{16\pi^2}\int h(r)r\tanh(\pi r)dr+\tfrac{1}{2}{\sum_{\{\gamma\}}}'\frac{\vol(\Gamma_\gamma\bs G_\gamma)\hat{h}(l_\gamma)}{2\sinh(l_\gamma/2)}\bigg)\\
&&\leq 2m\bigg(\frac{\vol(\Gamma\bs G)}{8\pi^2}\int h(r)r\tanh(\pi r)dr+h(i/2)\bigg).
\end{eqnarray*}
\end{proof}

The proof of the multiplicity bound follows easily from this lemma.
\begin{proof}[Proof of Proposition \ref{p:nonspherical2}]
A non-tempered non-spherical representation is of the form $\pi=\pi_s\otimes\frakD_m$ with $s=\frac{1}{2}-\tau=\frac{1}{p(\pi)}$ and $0\neq m\in \bbZ$. In this case $|m|\leq T(\pi)\leq |m|+1$ and since $m(\pi_s\otimes\frakD_m,\Gamma)=m(\pi_s\otimes\frakD_{-m},\Gamma)$ we may assume $m>0$.

Let $\Psi$ denote a positive even function with Fourier transform even positive and compactly supported such that $\hat{\Psi}(0)=1$ and $1\leq \Psi(i\tau)\leq 2$ for $\tau\in [0,\tfrac{1}{2}]$.
For $R>0$ let $h_R(r)=\frac{\sin^2(Rr)}{r^2}\Psi(r)$, so that for $\tau\in(0,\frac{1}{2}]$ we have
$$\frac{e^{2R\tau}}{4\tau^2}\leq h_R(i\tau)\leq 2\frac{e^{2R\tau}}{4\tau^2}.$$
The above lemma then gives
\begin{eqnarray*}
m(\pi_{s}\otimes \frakD_m,\Gamma)h_R(i\tau)\leq 2m\left(h_R(\tfrac{i}{2})+\frac{\vol(\Gamma\bs G)}{8\pi^2}\int h_R(r)r\tanh(\pi r)dr\right).\end{eqnarray*}
We can bound the integral $\tfrac{1}{8\pi^2}\int h_R(r)r\tanh(\pi r)dr\leq \int_\bbR \Psi(r)dr=1$,
implying that
\[m(\pi_{s}\otimes \frakD_m,\Gamma)e^{2R\tau}\leq 4m(e^{R}+\vol(\Gamma\bs G)).\]
Taking $R$ so that $e^{R}=\vol(\Gamma\bs G)$ implies
\[m(\pi_{s}\otimes \frakD_m,\Gamma)\leq 8m\left(\vol(\Gamma\bs G)\right)^{1-2\tau}.\]
\end{proof}
\begin{rem}
This proof is possible due to the minus sign of $h(i/2)$ in the trace formula. This is a special feature of $d=2$ (or in general when there is an odd number of non-spherical representation in the product). Consequently, this proof does not seem to generalize for $d>2$.
\end{rem}

%----------------------------------------------------------------
%%GATHER{C:/Bib/Mybib.bib}   % For Gather Purpose Only
%\bibliographystyle{amsalpha}
%\bibliography{C:/Bib/Mybib}

%*********************************************************************
%*********************************************************************
\def\cprime{$'$} \def\cprime{$'$}
\providecommand{\bysame}{\leavevmode\hbox to3em{\hrulefill}\thinspace}
\providecommand{\MR}{\relax\ifhmode\unskip\space\fi MR }
% \MRhref is called by the amsart/book/proc definition of \MR.
\providecommand{\MRhref}[2]{%
  \href{http://www.ams.org/mathscinet-getitem?mr=#1}{#2}
}
\providecommand{\href}[2]{#2}

%%*********************************************************************

\end{document}